\newtheorem{thm}{Theorem}
\newtheorem{prop}{Proposition}
\newtheorem*{prop*}{Proposition}
\newtheorem*{thm*}{Theorem}
\newtheorem{lem}{Lemma}
\newtheorem*{lemma*}{Lemma}
\newtheorem{cor}{Corollary}
\newtheorem*{cor*}{Corollary}
\newtheorem*{rem*}{Remark}
\theoremstyle{definition}
\newtheorem*{def*}{Definition}
\theoremstyle{remark}
\newtheorem*{exam}{Example}
\begin{document}
\title{Classification of invariant AHS--structures on semisimple locally symmetric spaces}
\author{Jan Gregorovi\v c}
\address{
Jan Gregorovi\v c, Department of Mathematics, Faculty of Sciences and Technology, Aarhus University, Ny Munkegade 118, 8000 Aarhus C, Denmark
}
\email{jan.gregorovic@seznam.cz}
\keywords{semisimple locally symmetric spaces, invariant geometric structures, almost hermitian symmetric structures, one graded parabolic geometries, classification}
\subjclass[2010]{53C35; 53C10}
\begin{abstract} 
In this article, we discuss which semisimple locally symmetric spaces admit an AHS--structure invariant to local symmetries. We classify them for all types of AHS--structures and determine possible equivalence classes of such AHS--structures. 

\end{abstract}
\maketitle

\section{Introduction}\label{1.1}

One can describe (locally) symmetric spaces employing the language of Cartan geometries. This allows us to use a functorial construction of invariant geometric structures. The construction is called extension and its application to symmetric spaces is described in \cite{greg}. We will review basic facts of the construction and then apply this construction in the case of almost hermitian symmetric (shortly AHS-) structures. This will allow us to obtain full classification of these structures on semisimple locally symmetric spaces. The AHS--structures are large class of geometric structures, which can be described as one graded parabolic geometries and cover the cases of projective, conformal and many other interesting geometric structures. We will use the classification and description of AHS--structures as it is summarized in \cite{parabook}.

\subsection{Locally symmetric spaces}

Firstly, we review a description of locally symmetric spaces using the language of Cartan geometries. This will use the fact that the Cartan geometries preserves many properties of the model geometries, which are in this case symmetric spaces. The description of symmetric spaces using Cartan geometries is summarized in \cite{greg}. This allows us to use the following definition of locally symmetric spaces.

\begin{def*}
A locally symmetric space is a locally flat Cartan geometry $(p: \mathcal{G}\to M, \omega)$ of type $(K,H,h)$, where
\begin{itemize}
\item $M$ is a smooth, connected manifold
\item $K$ is a Lie group with Lie subgroup $H$, $K/H$ connected, $h\in H$ such that $h^2=id_K$ and $H$ is open in the centralizer of $h$ in $K$
\item the maximal normal subgroup of $K$ contained in $H$ is trivial
\item $\mathcal{G}$ is a principal $H$-bundle over $M$
\item $\omega$ is a Cartan connection, i.e. a $\mathfrak{k}$-valued one form on $\mathcal{G}$, which is $H$-equivariant, reproduces the fundamental vector fields and provides isomorphism $T\mathcal{G}=\mathcal{G}\times \mathfrak{k}$.
\end{itemize}

We say that a locally symmetric space of type $(K,H,h)$ is semisimple if $K$ is semisimple, and we say that the homogeneous model $(K\to K/H,\omega_K)$ is a symmetric space, where $\omega_K$ is the Maurer-Cartan form.
\end{def*}

Since the Cartan geometry is locally flat, the geometry is locally isomorphic to an open subset of the homogeneous model $(K\to K/H,\omega_K)$. In particular, there is an atlas of $M$ with images in $K/H$ such, that the transition maps are restriction of left multiplication of elements of $K$. This allows us to locally define local symmetries by setting $S_{kH}lH:=khk^{-1}lH$. These are local automorphisms of the Cartan geometry, which obviously does not depend on the chosen map from the atlas.

Consequently, one can classify up to local equivalence the locally symmetric spaces in the following manner:

\begin{prop}\label{p1}
The locally symmetric space of the type $(K,H,h)$ is up to local equivalence determined by the pair $(\mathfrak{k},\mathfrak{h})$.
\end{prop}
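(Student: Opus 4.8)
The plan is to push everything down to the homogeneous model and then integrate a Lie-algebra isomorphism to a local isomorphism of Cartan geometries. The ``only if'' part is immediate: a local equivalence identifies, at corresponding points, the Cartan connections, hence the values $\mathfrak k$ and $\mathfrak k'$ together with the vertical subspaces $\mathfrak h$ and $\mathfrak h'$, compatibly with brackets, so local equivalence already forces $(\mathfrak k,\mathfrak h)\cong(\mathfrak k',\mathfrak h')$. The content is therefore the converse. For that I would first use local flatness: by definition (and as noted just above) each locally symmetric space of type $(K,H,h)$ is locally isomorphic to an open subset of its homogeneous model $(K\to K/H,\omega_K)$, while conversely every open subset of that model with the restricted Maurer--Cartan form is again a locally symmetric space of the same type. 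Hence two locally symmetric spaces are locally equivalent as soon as their homogeneous models are, and it suffices to show that $(\mathfrak k,\mathfrak h)\cong(\mathfrak k',\mathfrak h')$ implies that the models $(K\to K/H,\omega_K)$ and $(K'\to K'/H',\omega_{K'})$ are locally isomorphic.

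Fix an isomorphism $\varphi\colon\mathfrak k\to\mathfrak k'$ with $\varphi(\mathfrak h)=\mathfrak h'$. In the semisimple setting the Lie-algebra involution $\mathrm{Ad}(h)$ is recovered from $\mathfrak h$ (it is $+1$ on $\mathfrak h$ and $-1$ on the Killing-orthogonal complement of $\mathfrak h$, which is $\mathrm{ad}(\mathfrak h)$-invariant), so $\varphi$ automatically intertwines $\mathrm{Ad}(h)$ and $\mathrm{Ad}(h')$; otherwise one includes this intertwining in the notion of an isomorphism of pairs. By the integrability of Lie algebra homomorphisms, $\varphi$ integrates to a local Lie group isomorphism $\Phi$ from a neighbourhood $U$ of $e$ in $K$ onto a neighbourhood $U'$ of $e$ in $K'$ with $d_e\Phi=\varphi$. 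After shrinking $U$ so that $U\cap H=U\cap H^{\circ}$ is the $\exp$-image of a neighbourhood of $0$ in $\mathfrak h$, the relation $\Phi(\exp X)=\exp(\varphi X)$ gives $\Phi(U\cap H)=U'\cap H'$, so that $\Phi$ is compatible with the type up to the harmless ambiguity in $h$ modulo $Z(K)\cap H$. Consequently $\Phi$ descends to a diffeomorphism $\bar\Phi$ of a neighbourhood of $eH$ in $K/H$ onto a neighbourhood of $eH'$ in $K'/H'$, equivariant with respect to the group isomorphism $\Phi|_{U\cap H}\colon U\cap H\to U'\cap H'$ of the structure groups.

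It then remains to check that $\Phi$, viewed as a bundle map $K|_U\to K'|_{U'}$ covering $\bar\Phi$, pulls back the Maurer--Cartan form of $K'$ to that of $K$ twisted by $\varphi$. This is the short computation $\Phi^{*}\omega_{K'}=\varphi\circ\omega_K$, which follows from $L_{\Phi(g)}\circ\Phi=\Phi\circ L_g$ and $d_e\Phi=\varphi$; in other words $\Phi$ is a morphism of Cartan geometries along $\varphi$, and the $\mathfrak h$-equivariance and the reproduction of fundamental vector fields are inherited from those of $\omega_{K'}$ because $\varphi(\mathfrak h)=\mathfrak h'$. Together with the equivariance from the previous step, this exhibits the required local isomorphism of the two homogeneous models, hence of any two locally symmetric spaces with isomorphic pairs $(\mathfrak k,\mathfrak h)$. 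The effectivity hypothesis --- triviality of the maximal normal subgroup of $K$ contained in $H$ --- is not used in this direction; it serves instead to recover the type from the geometry and to exclude any ambiguity in $(K,H,h)$ beyond that already present in $(\mathfrak k,\mathfrak h)$.

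I expect the main obstacle to lie not in the geometric identity $\Phi^{*}\omega_{K'}=\varphi\circ\omega_K$, which is immediate, but in the bookkeeping around the global group data: making precise what ``local equivalence'' means for geometries whose types $(K,H,h)$ genuinely differ while only their Lie-algebra shadows agree; handling the connectedness and covering subtleties for $H$ and for the image of $U$ in $K/H$; and keeping track of the involution, in particular verifying that the freedom in choosing $h$, and in choosing $K$ and $H$ themselves within fixed Lie algebras, does not enlarge the local equivalence class. Everything beyond that is a matter of organizing these compatibilities.
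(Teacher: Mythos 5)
Your argument is correct and is essentially the paper's own proof written out in full: the paper compresses everything into the observation that $K\to K/H$ is locally equivalent to its simply connected covering (which is determined by $(\mathfrak k,\mathfrak h)$), and your direct integration of a Lie algebra isomorphism of pairs to a local isomorphism of the homogeneous models, combined with local flatness, is exactly the mechanism behind that remark.
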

\begin{proof}
This follows from the fact that $K\to K/H$ is locally equivalent to its simply connected covering.
\end{proof}

We note that in \cite{bert}, the equivalence class corresponding to pair $(\mathfrak{k},\mathfrak{h})$ is  called a germ of symmetric spaces. As we will see later, for our area of interest, the exact type $(K,H,h)$ plays a important role, but for many question, we will restrict to the pair $(\mathfrak{k},\mathfrak{h})$.

The classification of pairs $(\mathfrak{k},\mathfrak{h})$, which are also called symmetric Lie algebras, was in the semisimple case presented in \cite{berg} and here we recall the main classification theorem:

\begin{prop}
Each semisimple symmetric Lie algebra is a finite sum of symmetric Lie algebras of the following two types (which are called simple):

\begin{itemize}
\item $(\mathfrak{h}\oplus \mathfrak{h},\mathfrak{h})$ (we denote them simply by $\mathfrak{h}$), where $\mathfrak{h}$ is a simple Lie algebra and the second $\mathfrak{h}$ is the diagonal in $\mathfrak{h}\oplus \mathfrak{h}$.

\item$(\mathfrak{k},\mathfrak{h})$, where $\mathfrak{k}$ is a simple Lie algebra. The pairs $(\mathfrak{k},\mathfrak{h})$ of all possible cases can be found in the table in \cite{berg}.
\end{itemize}
\end{prop}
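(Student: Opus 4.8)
The plan is to reduce the general semisimple case to the two ``simple'' pieces by adapting the decomposition of $\mathfrak{k}$ into simple ideals to the symmetry involution. Write $(\mathfrak{k},\mathfrak{h})$ for the given semisimple symmetric Lie algebra and let $\sigma$ be the involutive automorphism of $\mathfrak{k}$ with fixed point subalgebra $\mathfrak{h}=\mathfrak{k}^\sigma$ (coming from the element $h$ in the type). Decompose $\mathfrak{k}=\mathfrak{k}_1\oplus\dots\oplus\mathfrak{k}_n$ into simple ideals. These are exactly the minimal ideals of $\mathfrak{k}$, so the automorphism $\sigma$ permutes them, and since $\sigma^2=\mathrm{id}$ each orbit of this permutation has length $1$ or $2$.

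First I would analyse a single orbit. If $\sigma(\mathfrak{k}_i)=\mathfrak{k}_i$, then $\sigma|_{\mathfrak{k}_i}$ is an involutive automorphism of the simple Lie algebra $\mathfrak{k}_i$ and $(\mathfrak{k}_i,\mathfrak{k}_i^\sigma)$ is a symmetric Lie algebra of the second type; effectivity (no nonzero ideal of $\mathfrak{k}$ lies in $\mathfrak{h}$) forces $\sigma|_{\mathfrak{k}_i}\neq\mathrm{id}$, so the summand is nondegenerate. If instead $\sigma$ interchanges two distinct ideals $\mathfrak{k}_i$ and $\mathfrak{k}_j$, then $\sigma$ restricts to a Lie algebra isomorphism $\mathfrak{k}_i\to\mathfrak{k}_j$; using it to identify $\mathfrak{k}_j$ with $\mathfrak{k}_i$ turns $\sigma|_{\mathfrak{k}_i\oplus\mathfrak{k}_j}$ into the flip $(X,Y)\mapsto(Y,X)$ of $\mathfrak{k}_i\oplus\mathfrak{k}_i$, whose fixed points form the diagonal. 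Hence $(\mathfrak{k}_i\oplus\mathfrak{k}_j,(\mathfrak{k}_i\oplus\mathfrak{k}_j)^\sigma)$ is a symmetric Lie algebra of the first type with $\mathfrak{h}=\mathfrak{k}_i$ simple.

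Then I would reassemble the orbits. Grouping the simple ideals by $\sigma$-orbit writes $\mathfrak{k}$ as a $\sigma$-invariant direct sum $\mathfrak{k}=\mathfrak{a}_1\oplus\dots\oplus\mathfrak{a}_r$, where each $\mathfrak{a}_l$ is either a single fixed simple ideal or a swapped pair. Because $\sigma$ preserves each $\mathfrak{a}_l$, the fixed point subalgebra decomposes accordingly, $\mathfrak{h}=\mathfrak{k}^\sigma=\mathfrak{a}_1^\sigma\oplus\dots\oplus\mathfrak{a}_r^\sigma$, so $(\mathfrak{k},\mathfrak{h})=\bigoplus_l(\mathfrak{a}_l,\mathfrak{a}_l^\sigma)$ as symmetric Lie algebras, and by the previous paragraph every summand is of one of the two listed types. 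The actual list of type-two pairs --- simple $\mathfrak{k}$ together with an involution taken up to conjugacy --- is Berger's table, which I would cite from \cite{berg} rather than reprove; the substance of this proposition is the orbit decomposition, and the only point needing care is verifying that the identification used for a swapped pair really carries $\sigma$ to the flip and that $\mathfrak{h}$ splits along the ideal decomposition.
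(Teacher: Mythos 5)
Your argument is correct. Note, however, that the paper offers no proof of this proposition at all: it is stated purely as a recollection of Berger's classification theorem, with the entire content deferred to \cite{berg}. What you have written is the standard (and right) argument for the reduction to simple pieces: $\sigma$ permutes the simple ideals because they are the minimal ideals of the semisimple algebra $\mathfrak{k}$, the orbits have length $1$ or $2$ since $\sigma^2=\mathrm{id}$, a length-one orbit gives a pair of the second type (with effectivity ruling out $\sigma|_{\mathfrak{k}_i}=\mathrm{id}$, i.e.\ ruling out a simple ideal inside $\mathfrak{h}$), and a length-two orbit gives the group case: since $\sigma|_{\mathfrak{k}_j}=(\sigma|_{\mathfrak{k}_i})^{-1}$, the identification $(X,Y)\mapsto(X,\sigma(Y))$ does carry $\sigma$ to the flip, whose fixed set is the diagonal. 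The splitting $\mathfrak{h}=\bigoplus_l\mathfrak{a}_l^{\sigma}$ follows from uniqueness of components in the $\sigma$-invariant ideal decomposition, as you say. You correctly leave to Berger only what genuinely must be left to him, namely the table of involutions of the simple Lie algebras up to conjugacy; your write-up thus supplies the reduction step that the paper (reasonably) takes as known.
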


\subsection{Extension of Cartan geometries}

Here, we summarize the functorial construction called extension, which will allow us to construct every Cartan geometry on locally symmetric space invariant to local symmetries.

\begin{def*}
Let $P$ be a Lie subgroup of a Lie group $G$ and let $H$ be a Lie subgroup of a Lie group $K$. We say, that pair a $(i,\alpha)$ is an extension of $(K,H)$ to $(G,P)$ if it satisfies:
\begin{itemize}
\item $i:H\to P$ is a Lie group homomorphism
\item $\alpha: \mathfrak{k}\to \mathfrak{g}$ is a linear mapping extending $T_ei:\mathfrak{h}\to \mathfrak{p}$
\item $\alpha$ induces a vector space isomorphism of $\mathfrak{k}/\mathfrak{h}$ and $\mathfrak{g}/\mathfrak{p}$
\item $Ad(i(h))\circ \alpha=\alpha \circ Ad(h)$ for all $h\in H$ i.e. $\alpha$ is a homomorphism of the representations $Ad(H)$ and $Ad(i(H))|_{\operatorname{Im}(\alpha)}$
\end{itemize}
\end{def*}

Having an extension $(i,\alpha)$ of Cartan geometry $(p: \mathcal{G}\to M, \omega)$ of type $(K,H)$, one forms bundle $\mathcal{G}\times_iP\to M$ and defines one form $\omega_\alpha$ by $\omega_\alpha|_{\mathcal{G}\times_ii(H)}=\alpha\circ \omega$ and $\omega_\alpha|_{i^{-1}(P)\times_iP}=\omega_P$ and extending it by right $P$ action to be equivariant. The general theory of \cite{parabook} and results in \cite{greg} allows us to formulate the following theorem:

\begin{thm}\label{t1}
Let $(i,\alpha)$ be an extension of $(K,H)$ to $(G,P)$ and $(p: \mathcal{G}\to M, \omega)$ a Cartan geometry of type $(K,H)$. Then the pair $(\mathcal{G}\times_iP\to M,\omega_\alpha)$ defined above is a Cartan geometry of type $(G,P)$ and any local automorphism of the Cartan geometry $(p: \mathcal{G}\to M, \omega)$ is a local automorphism of the Cartan geometry $(\mathcal{G}\times_iP\to M,\omega_\alpha)$.

Any Cartan geometry of type $(G,P)$ on locally symmetric space $(p: \mathcal{G}\to M, \omega)$ of type $(K,H,h)$ invariant to local symmetries is constructed using an extension of $(K,H)$ to $(G,P)$.
\end{thm}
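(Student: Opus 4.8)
The plan is to treat the two assertions in turn. For the first, the statement is essentially the extension functor of the general theory, so I would verify directly that $\omega_\alpha$ is a Cartan connection. Writing $j\colon\mathcal{G}\to\mathcal{G}\times_iP$, $u\mapsto[u,e]$, every tangent vector at $j(u)$ can be written as $\xi=Tj(\eta)+\zeta_X(j(u))$ with $\eta\in T_u\mathcal{G}$ and $X\in\mathfrak{p}$, the only ambiguity in this being $\eta\mapsto\eta+\omega^{-1}(Y)(u)$, $X\mapsto X-T_ei(Y)$ for $Y\in\mathfrak{h}$; since $\alpha$ extends $T_ei$, the value $\alpha(\omega(\eta))+X$ does not change, so $\omega_\alpha$ is well defined at such points and then everywhere by right $P$-equivariance. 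Reproduction of the fundamental vector fields is imposed by the construction, and the $P$-equivariance relies on the intertwining property $\mathrm{Ad}(i(h))\circ\alpha=\alpha\circ\mathrm{Ad}(h)$. That $\omega_\alpha$ restricts to a linear isomorphism on each tangent space follows, after matching dimensions, from injectivity: if $\alpha(\omega(\eta))+X=0$ then, because $\alpha$ induces an isomorphism $\mathfrak{k}/\mathfrak{h}\cong\mathfrak{g}/\mathfrak{p}$ and $X\in\mathfrak{p}$, we get $Y:=\omega(\eta)\in\mathfrak{h}$ and $X=-T_ei(Y)$, so that $Tj(\omega^{-1}(Y)(u))=\zeta_{T_ei(Y)}(j(u))$ forces $\xi=0$. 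For the automorphism claim, a local automorphism $\Phi$ of $(\mathcal{G},\omega)$ over $\phi\colon M\to M$ induces $\bar\Phi([u,p]):=[\Phi(u),p]$, which is well defined since $\Phi$ is $H$-equivariant, is $P$-equivariant, covers $\phi$, and satisfies $\bar\Phi^*\omega_\alpha=\omega_\alpha$: from $\bar\Phi\circ j=j\circ\Phi$ and $\Phi^*\omega=\omega$ one obtains $\omega_\alpha(T\bar\Phi(Tj(\eta)+\zeta_X))=\alpha(\omega(T\Phi\,\eta))+X=\alpha(\omega(\eta))+X$, and equivariance propagates this to all of $\mathcal{G}\times_iP$.

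For the converse I would reduce to the homogeneous model. By Proposition~\ref{p1} and local flatness, $M$ carries an atlas whose charts are identified with open subsets of $K/H$ and whose transition maps are left translations $L_k$; these are automorphisms of $(K\to K/H,\omega_K)$ and conjugate local symmetries to local symmetries, so it suffices to extract an extension from an invariant $(G,P)$-geometry $(\mathcal{G}',\omega')$ over an open piece of $K/H$ and then check that the data obtained is chart-independent. On such a piece, invariance means each local symmetry $S_x$ admits a canonical lift $\tilde S_x$ to an automorphism of $(\mathcal{G}',\omega')$; the group generated by the $\tilde S_x$ has, by semisimplicity of $K$ together with the effectivity hypothesis, Lie algebra $\mathfrak{k}$, which yields a homomorphism from $\mathfrak{k}$ to the infinitesimal automorphisms of $(\mathcal{G}',\omega')$ projecting onto the infinitesimal action on $K/H$. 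Fixing $u_0$ in the fibre over $o=eH$, I would set $i\colon H\to P$ by $h\cdot u_0=u_0\cdot i(h)$ and $\alpha\colon\mathfrak{k}\to\mathfrak{g}$ by $\alpha(X)=\omega'_{u_0}\bigl(\tfrac{d}{dt}\big|_0\exp(tX)\cdot u_0\bigr)$, and then verify the extension axioms: $\alpha$ restricts to $T_ei$ because $\exp(tY)\cdot u_0=u_0\cdot i(\exp tY)$ for $Y\in\mathfrak{h}$; $\alpha$ induces $\mathfrak{k}/\mathfrak{h}\cong\mathfrak{g}/\mathfrak{p}$ since both sides are identified with $T_oM$ through the actions and $\omega'$; and $\mathrm{Ad}(i(h))\circ\alpha=\alpha\circ\mathrm{Ad}(h)$ because the symmetry action is by automorphisms and commutes with the principal $P$-action, hence intertwines $\omega'$ with its pullbacks. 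Finally, $[k,p]\mapsto(k\cdot u_0)\cdot p$ is a $P$-bundle isomorphism $K\times_iP\to\mathcal{G}'$ over the identity intertwining $\omega_\alpha$ and $\omega'$, as one sees by comparing the two forms at $[k,e]$ using invariance of $\omega'$ under the symmetry action; and since $(i,\alpha)$ is intrinsic, these local isomorphisms glue to $\mathcal{G}\times_iP\cong\mathcal{G}'$.

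The delicate part is the converse, and within it the globalization. One must make the lifts $\tilde S_x$, and hence the infinitesimal $\mathfrak{k}$-action on $\mathcal{G}'$, genuinely canonical so that rigidity of automorphisms of Cartan geometries can be invoked, and one must then check that the local presentations $\mathcal{G}'|_{U_\beta}\cong(\mathcal{G}|_{U_\beta})\times_iP$ patch with the \emph{same} extension $(i,\alpha)$ across chart changes; this is exactly where the Cartan-geometric description of (locally) symmetric spaces from \cite{greg} is needed. Checking that $\mathfrak{k}$ is the Lie algebra of the group generated by the local symmetries likewise genuinely uses that $K$ is semisimple and that the maximal normal subgroup of $K$ contained in $H$ is trivial.
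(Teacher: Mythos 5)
Your proposal is correct, and it essentially reconstructs from scratch what the paper delegates to its references: the paper's own proof of this theorem consists of citing \cite{parabook} and \cite{greg} for the first claim and for the global form of the second, plus the single observation that the global statement localizes because $K$ acts locally transitively and by local symmetries. Your verification of the first claim (well-definedness of $\omega_\alpha$ modulo the ambiguity $\eta\mapsto\eta+\omega^{-1}(Y)$, $X\mapsto X-T_ei(Y)$; injectivity via $\alpha^{-1}(\mathfrak p)=\mathfrak h$; the lift $[u,p]\mapsto[\Phi(u),p]$) is exactly the standard extension-functor argument of \cite{parabook}, and your converse — reduction to the homogeneous model through the locally flat atlas, generation of an infinitesimal $\mathfrak k$-action by the lifted symmetries, and reconstruction of $(i,\alpha)$ at a point $u_0$ over $eH$ — is the argument of \cite{greg}; your observation that the transvection algebra equals $\mathfrak k$ correctly uses semisimplicity together with the triviality of the maximal normal subgroup of $K$ contained in $H$. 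The two points you flag as delicate are the right ones: a local symmetry of the base may a priori admit several lifts to $(\mathcal G',\omega')$, so one must use rigidity of automorphisms of Cartan geometries to make the lifted $\mathfrak k$-action coherent, and one must check that the data $(i,\alpha)$ extracted in different charts agree (equivalently, that one obtains a single extension rather than a chart-dependent family); these are precisely the content of the cited results and are not re-proved in the paper either, so at the level of detail the paper itself demands your proposal is complete.
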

\begin{proof}
In \cite{parabook} and \cite{greg}, it is shown that the first claim is indeed true. The second claim is there proved globally. This means that generally any Cartan geometry of type $(G,P)$ invariant to local symmetries is only locally isomorphic to an extension of $(K,H)$ to $(G,P)$. But since the group $K$ acts locally transitively and by local symmetries, the second claim also holds. 
\end{proof}

Thus in particular, the group $H$ plays a role for the existence of the extension. However, we get the following corollary of proposition \ref{p1}, because the covering is in fact extension to $(K,H)$ and we can compose extensions.

\begin{cor}
There is extension of $(K,H)$ to $(G,P)$ only if there is extension of the simply connected covering of $(K,H)$ to $(G,P)$.
\end{cor}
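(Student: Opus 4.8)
The plan is to realise the covering map itself as an extension and then use the fact that extensions compose. Write $\pi\colon\tilde K\to K$ for the simply connected covering homomorphism and put $\tilde H:=\pi^{-1}(H)$, so that $(\tilde K,\tilde H)$ is the simply connected covering of $(K,H)$ from Proposition~\ref{p1}. Since $\pi$ is a covering homomorphism it is a local diffeomorphism, hence $T_e\pi\colon\tilde{\mathfrak k}\to\mathfrak k$ is a Lie algebra isomorphism carrying $\tilde{\mathfrak h}$ onto $\mathfrak h$; I identify $\tilde{\mathfrak k}=\mathfrak k$ and $\tilde{\mathfrak h}=\mathfrak h$ along it. First I would verify that $(i_0,\alpha_0):=(\pi|_{\tilde H},\,id_{\mathfrak k})$ is an extension of $(\tilde K,\tilde H)$ to $(K,H)$: the restriction $\pi|_{\tilde H}\colon\tilde H\to H$ is a Lie group homomorphism, surjective because $\tilde H=\pi^{-1}(H)$, and its differential is the identity on $\mathfrak h$ under the identification above; $\alpha_0=id$ extends this differential and induces the identity isomorphism $\mathfrak k/\mathfrak h\to\mathfrak k/\mathfrak h$; and $Ad(\pi(h))\circ id=id\circ Ad(h)$ for all $h\in\tilde H$ since $\pi$ is a group homomorphism and $Ad$ is determined by its infinitesimal version. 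So all four defining conditions of an extension are met.

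Second, I would record the routine fact that extensions compose: if $(i_1,\alpha_1)$ is an extension of $(K_1,H_1)$ to $(K_2,H_2)$ and $(i_2,\alpha_2)$ is an extension of $(K_2,H_2)$ to $(K_3,H_3)$, then $(i_2\circ i_1,\alpha_2\circ\alpha_1)$ is an extension of $(K_1,H_1)$ to $(K_3,H_3)$. Indeed $i_2\circ i_1$ is a composition of Lie group homomorphisms; $\alpha_2\circ\alpha_1$ agrees with $T_e(i_2\circ i_1)$ on $\mathfrak h_1$ because each factor extends the corresponding tangent map; $\alpha_2\circ\alpha_1$ induces a linear isomorphism $\mathfrak k_1/\mathfrak h_1\to\mathfrak k_3/\mathfrak h_3$, being a composition of two such; and the equivariance chains, $Ad(i_2(i_1(h)))\circ\alpha_2\circ\alpha_1=\alpha_2\circ Ad(i_1(h))\circ\alpha_1=\alpha_2\circ\alpha_1\circ Ad(h)$ for $h\in H_1$.

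Finally, given an extension $(i,\alpha)$ of $(K,H)$ to $(G,P)$, composing it with $(i_0,\alpha_0)$ yields the extension $(i\circ i_0,\alpha\circ\alpha_0)$ of $(\tilde K,\tilde H)$ to $(G,P)$, which is exactly the assertion. I do not expect a genuine obstacle here; the one point worth flagging is the meaning of $\tilde H$. With $\tilde H=\pi^{-1}(H)$ the map $\pi|_{\tilde H}$ surjects onto $H$ and the argument is as above; if instead one takes the identity component $\tilde H=(\pi^{-1}(H))_0$, then one first observes that any extension of $(K,H)$ to $(G,P)$ restricts to an extension of $(K,H_0)$ to $(G,P)$ and composes with the covering extension $(\tilde K,\tilde H)\to(K,H_0)$, reaching the same conclusion.
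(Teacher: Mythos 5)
Your proposal is correct and follows exactly the route the paper indicates: it justifies the corollary by observing that the covering homomorphism $(\pi|_{\tilde H},\,id_{\mathfrak k})$ is itself an extension of $(\tilde K,\tilde H)$ to $(K,H)$ and that extensions compose. You simply spell out the verification of the four axioms that the paper leaves implicit, so there is nothing to add.
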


\subsection{Holomorphic Cartan geometries}

Finally, we add few remarks about holomorphic Cartan geometries from \cite{parabook}, which are Cartan geometries $(\mathcal{G}\to M,\omega)$ of type $(G,P)$, where $(G,P)$ are complex Lie groups, $\mathcal{G}\to M$ is a holomorphic principal $P$-bundle and $\omega$ is a holomorphic Cartan connection. The curvature of Cartan geometry of type $(G,P)$ splits as a real two form to $\kappa=\kappa_{(2,0)}+\kappa_{(1,1)}+\kappa_{(0,2)}$ according to linearity and anti-linearity with respect to the complex structure. There is the following characterization of holomorphic Cartan geometries of type $(G,P)$:

\begin{prop}\label{4.1.1}
Let $(\mathcal{G}\to M,\omega)$ be Cartan geometry of type $(G,P)$, where $(G,P)$ are complex Lie groups. Then it is holomorphic Cartan geometry of type $(G,P)$ if and only if $\kappa=\kappa_{(2,0)}$ i.e. $\kappa$ is complex bilinear.
\end{prop}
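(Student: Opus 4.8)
The plan is to transfer the complex structure of $\mathfrak{g}$ to the total space $\mathcal{G}$ by means of the Cartan connection and to read off integrability and holomorphicity from the structure equation $\kappa=d\omega+\tfrac12[\omega,\omega]$. The implication ``holomorphic $\Rightarrow$ $\kappa=\kappa_{(2,0)}$'' is immediate: if $\omega$ is a holomorphic Cartan connection, then $\omega$ is a $\mathfrak{g}$-valued form of type $(1,0)$ with $\bar\partial\omega=0$, so $d\omega=\partial\omega$ is of type $(2,0)$; since the Lie bracket of $\mathfrak{g}$ is complex bilinear, $[\omega,\omega]$ is of type $(2,0)$ as well, whence $\kappa$ is of type $(2,0)$.

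For the converse, assume $\kappa=\kappa_{(2,0)}$. First I would use the absolute parallelism $\omega$, which trivialises $T\mathcal{G}\cong\mathcal{G}\times\mathfrak{g}$, to pull back multiplication by $i$ on $\mathfrak{g}$ to an almost complex structure $J$ on $\mathcal{G}$, characterised by $\omega\circ J=i\cdot\omega$. Because $\omega$ is equivariant for the structure group, which in this setting is $P$, and $Ad(p)$ is a complex linear automorphism of $\mathfrak{g}$ for every $p\in P$, the structure $J$ is preserved by all right translations, so $P$ acts by $J$-holomorphic diffeomorphisms. Since $\mathfrak{p}\subseteq\mathfrak{g}$ is a complex subalgebra, the vertical subbundle $\omega^{-1}(\mathfrak{p})$ is $J$-invariant, and together with the $P$-invariance of $J$ this shows that $J$ descends to an almost complex structure $J_M$ on $M$ for which the projection is pseudoholomorphic.

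Next I would check integrability. For $X\in\mathfrak{g}$ let $\omega^{-1}(X)$ denote the corresponding $\omega$-constant vector field; the structure equation gives $[\omega^{-1}(X),\omega^{-1}(Y)]=\omega^{-1}\big([X,Y]-\kappa(\omega^{-1}(X),\omega^{-1}(Y))\big)$. Substituting this, and using $i^2=-1$ together with the complex bilinearity of the bracket, into the Nijenhuis tensor $N_J(\xi,\eta)=[J\xi,J\eta]-[\xi,\eta]-J[J\xi,\eta]-J[\xi,J\eta]$ with $\xi=\omega^{-1}(X)$, $\eta=\omega^{-1}(Y)$, the ``flat'' contributions coming from $[X,Y]$ cancel, and the terms involving $\kappa$ reassemble, after sorting $\kappa$ by type, into a nonzero multiple of $\kappa_{(0,2)}$. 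Hence $\kappa=\kappa_{(2,0)}$ forces $N_J=0$, so by the Newlander--Nirenberg theorem $(\mathcal{G},J)$ is a complex manifold; as the Nijenhuis tensor of $J_M$ is the projection of $N_J$, the base $(M,J_M)$ is complex and the projection is holomorphic. Finally, $\omega$ is of type $(1,0)$ by construction and $\bar\partial\omega=(d\omega)_{(1,1)}=\kappa_{(1,1)}-[\omega,\omega]_{(1,1)}=0$ since $\kappa=\kappa_{(2,0)}$ and $[\omega,\omega]$ is of type $(2,0)$, so $\omega$ is a holomorphic one-form. It remains to observe that a smooth principal $P$-bundle with complex total space, over a complex base, on which the complex group $P$ acts holomorphically and with holomorphic projection, admits holomorphic local trivialisations, obtained by correcting smooth local sections over a polydisc by a $\bar\partial$-argument; hence $(\mathcal{G}\to M,\omega)$ is a holomorphic Cartan geometry of type $(G,P)$.

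The step I expect to be most delicate is the Nijenhuis computation and the accompanying bookkeeping of the type decomposition $\kappa=\kappa_{(2,0)}+\kappa_{(1,1)}+\kappa_{(0,2)}$: one must keep the complex structure on the target $\mathfrak{g}$ carefully separate from the one on $\mathcal{G}$, and check that only the $(0,2)$-component of $\kappa$ survives in $N_J$ (the $(1,1)$-part cancelling by antisymmetry and the $(2,0)$-part by complex bilinearity). A secondary point requiring care is the very last one, where it is integrability, and not merely formal compatibility of the structures, that allows one to upgrade a smooth principal-bundle atlas to a holomorphic one.
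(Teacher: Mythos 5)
The paper offers no proof of this proposition at all --- it is simply imported from \cite{parabook} --- so there is nothing internal to compare your argument against. Your proof is essentially the standard one from that reference and it is correct: the key identity is that for $\omega$-constant fields the flat terms cancel in the Nijenhuis tensor and one is left with $N_J(\xi,\eta)=\omega^{-1}\bigl(4\kappa_{(0,2)}(\xi,\eta)\bigr)$, so integrability uses only $\kappa_{(0,2)}=0$, while holomorphicity of $\omega$ itself is exactly $\bar\partial\omega=\kappa_{(1,1)}=0$, and both are needed for the stated equivalence, as you correctly track. The only cosmetic quibble is the final step: no $\bar\partial$-correction is needed to get holomorphic local trivialisations --- once $\pi$ is a holomorphic submersion and $P$ acts holomorphically and simply transitively on the fibres, a local holomorphic section comes straight from the holomorphic inverse function theorem.
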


The easy consequence of the formula for the curvature of an extension is:

\begin{cor}\label{comp}
Extension of locally symmetric space of type $(K,H,h)$ to Cartan geometry of type $(G,P)$, where $(G,P)$ are complex Lie groups, is a holomorphic Cartan geometry if and only if $(K,H)$ are complex Lie groups and $\alpha$ is complex linear. If this is the case, there is always a non equivalent extension given by complex conjugation.
\end{cor}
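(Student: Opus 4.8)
The plan is to work directly with the curvature formula for an extension, which is the "easy consequence" referred to in the statement. Recall that for an extension $(i,\alpha)$ of $(K,H)$ to $(G,P)$, the curvature function of the extended Cartan geometry $(\mathcal{G}\times_iP\to M,\omega_\alpha)$ is governed, along $\mathcal{G}\subset\mathcal{G}\times_iP$, by the expression built from $\alpha$ applied to the (flat) structure equation of $\omega$ together with the bracket in $\mathfrak{g}$; concretely the relevant quantity is of the form $[\alpha(X),\alpha(Y)]_{\mathfrak{g}}-\alpha([X,Y]_{\mathfrak{k}})$ for $X,Y\in\mathfrak{k}$, pushed to $\mathfrak{g}/\mathfrak{p}\cong\mathfrak{k}/\mathfrak{h}$, since $\omega$ itself is flat (the locally symmetric space is a locally flat Cartan geometry). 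This is a $P$-equivariant function on $\mathcal{G}$ valued in $\Lambda^2(\mathfrak{g}/\mathfrak{p})^*\otimes\mathfrak{g}$, and it is constant along $\mathcal{G}$ in the appropriate sense, so I may evaluate it on a single fibre. I would first isolate this curvature term explicitly (citing \cite{greg} and \cite{parabook} for the formula) so that the rest of the argument is about its $(p,q)$-type decomposition.

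Next I would apply Proposition \ref{4.1.1}: the extension is a holomorphic Cartan geometry precisely when this curvature two-form is complex bilinear, i.e. $\kappa=\kappa_{(2,0)}$. So the task reduces to: when is $(X,Y)\mapsto[\alpha(X),\alpha(Y)]_{\mathfrak{g}}-\alpha([X,Y]_{\mathfrak{k}})$ complex bilinear as a map on $\mathfrak{k}/\mathfrak{h}$, given that $\mathfrak{g}$ carries a complex structure (as the Lie algebra of the complex group $G$) and $\mathfrak{p}$ is a complex subalgebra? The "if" direction is immediate: if $\mathfrak{k}$ is a complex Lie algebra (so the bracket is complex bilinear) and $\alpha$ is complex linear, then both $[\alpha(X),\alpha(Y)]_{\mathfrak{g}}$ and $\alpha([X,Y]_{\mathfrak{k}})$ are complex bilinear in $(X,Y)$, hence so is their difference, and $\kappa=\kappa_{(2,0)}$. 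For the "only if" direction I would argue that complex bilinearity of the curvature, combined with the defining properties of an extension (in particular that $\alpha$ induces the linear isomorphism $\mathfrak{k}/\mathfrak{h}\cong\mathfrak{g}/\mathfrak{p}$ and intertwines the $\mathrm{Ad}$-actions), forces $\alpha$ to be complex linear and forces the bracket on $\mathfrak{k}$ to be complex bilinear, so that $\mathfrak{k}$ (hence $K$, up to the covering/connectedness already handled) is a complex Lie group; here I expect to use that the $(1,1)$- and $(0,2)$-components of the curvature encode exactly the anti-linearity of $\alpha$ and the anti-complex-bilinear part of the bracket, so their vanishing is equivalent to complex linearity of $\alpha$ and complex bilinearity of $[\cdot,\cdot]_{\mathfrak{k}}$.

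Finally, for the last sentence I would observe that if $(K,H)$ are complex groups and $\alpha$ is complex linear, then $\bar\alpha:=\mathrm{conj}_{\mathfrak{g}}\circ\alpha\circ\mathrm{conj}_{\mathfrak{k}}$ together with the same $i$ (or its conjugate) is again an extension of $(K,H)$ to $(G,P)$ — one checks the four defining conditions are stable under conjugating on both sides — and it is complex linear as well, hence also yields a holomorphic Cartan geometry. That $\bar\alpha$ is genuinely inequivalent to $\alpha$ follows because an equivalence of extensions would have to intertwine $\alpha$ and $\bar\alpha$ in a way incompatible with them being related by conjugation on the complex vector space $\mathfrak{g}/\mathfrak{p}$ (an isomorphism commuting with the $P$-action cannot convert a complex-linear identification into its conjugate unless the complex structure is trivial, which it is not). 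The main obstacle I anticipate is the "only if" direction: extracting cleanly, from vanishing of $\kappa_{(1,1)}+\kappa_{(0,2)}$, that \emph{both} $\alpha$ is complex linear \emph{and} $\mathfrak{k}$ is complex — one must be careful that the curvature, being a map on the quotient $\mathfrak{k}/\mathfrak{h}$ rather than on $\mathfrak{k}$ itself, still sees enough of $\alpha$ and of the bracket; this should work because $\mathfrak{h}$ itself is a complex subalgebra (being the Lie algebra of $H\subset P$ with $P$ complex and $i$ the relevant homomorphism) and $\alpha$ is already complex linear on $\mathfrak{h}$, so no information is lost on passing to the quotient.
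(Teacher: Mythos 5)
Your overall strategy --- write the curvature of the extension as $\kappa_\alpha(X,Y)=[\alpha(X),\alpha(Y)]_{\mathfrak g}-\alpha([X,Y]_{\mathfrak k})$ (using local flatness of the symmetric space) and then invoke the characterization of holomorphic Cartan geometries by $\kappa=\kappa_{(2,0)}$ --- is exactly the route the paper intends; it offers no further argument, calling the corollary an ``easy consequence of the formula for the curvature of an extension''. Your ``if'' direction and your construction of the conjugate extension $\bar\alpha=\mathrm{conj}_{\mathfrak g}\circ\alpha\circ\mathrm{conj}_{\mathfrak k}$ are fine. The genuine gap is in the ``only if'' direction, and it is not a technicality that the sketch you give can be expected to repair. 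The curvature is the \emph{difference} of two terms, restricted to the $-1$ eigenspace $\mathfrak m$ of $Ad(h)$ and valued in $\mathfrak p$ (since $[\mathfrak m,\mathfrak m]\subset\mathfrak h$ and the $\mathfrak g_{-1}$-part of $[\alpha(X),\alpha(Y)]$ vanishes); its $(1,1)$- and $(0,2)$-components therefore see only a contracted combination of $\alpha|_{\mathfrak m}$ and the bracket, not $\alpha$ itself, so their vanishing does not force complex linearity of $\alpha$ --- cancellation between the two terms is possible. The cleanest way to see the step fail: any \emph{locally flat} extension has $\kappa_\alpha=0=\kappa_{(2,0)}$ and is therefore holomorphic by the quoted characterization, with no constraint whatsoever on $K$ or $\alpha$. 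The paper's own Section~4 contains such extensions with $(G,P)$ complex and $K$ not complex, e.g.\ the compact Hermitian symmetric pair $(\mathfrak e_6,\mathfrak{so}(10)\oplus\mathfrak{so}(2))$ extended into $\mathfrak e_6^{\mathbb C}$, which is (locally) the flat model $G/P$ itself. So either tacit additional hypotheses are in force or the ``only if'' must be read more narrowly; in any case the derivation you propose cannot go through as written, and you should at minimum isolate exactly which component of $\kappa_\alpha$ you claim reconstructs the anti-linear part of $\alpha$ and verify that claim against the flat case.

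A second, smaller gap concerns the inequivalence of $\alpha$ and $\bar\alpha$. Your argument only excludes equivalences of the form $\mathrm{Ad}_{p_0^{-1}}\circ\alpha$, but Lemma~\ref{equiv} also permits precomposition with an arbitrary automorphism $\sigma$ of $K$ preserving $H$. If $K$ is a complex group it may admit an \emph{anti-holomorphic} such automorphism, and then $\mathrm{Ad}_{p_0^{-1}}\circ\alpha\circ T\sigma$ is anti-linear and can coincide with $\bar\alpha$, giving an equivalence after all. This is precisely why the paper, whenever it invokes this corollary later, qualifies the inequivalence as holding only ``up to outer automorphisms of the Lie group $Ad(H)$ induced by automorphisms of $K$''. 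Your proof should either carry that caveat or rule out such a $\sigma$.
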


\subsection{AHS--structures}\label{1.2}

Now, we summarize basic results on AHS--structures and one graded parabolic geometries from \cite{parabook}, where one can find more details.

Let $G$ be a semisimple Lie group and $P$ parabolic subgroup corresponding to the grading $\mathfrak{g}=\mathfrak{g}_{-1}\oplus \mathfrak{g}_0\oplus \mathfrak{g}_1$, where we assume that there is no nonzero ideal of $\mathfrak{g}$ in $\mathfrak{g}_0$. Further, we denote $G_0$ the subgroup of grading preserving elements of $P$. The structure of one gradings is the following according to \cite{parabook}:

\begin{prop}\label{4.1.1}
Let $\mathfrak{g}=\mathfrak{g}_{-1}\oplus \mathfrak{g}_0\oplus \mathfrak{g}_1$ be a one graded Lie algebra. Then
\begin{itemize}
\item $\mathfrak{g}$ is a sum of one graded simple Lie algebras $\mathfrak{g}^{(j)}$;
\item the decomposition of $\mathfrak{g}_0$-module $\mathfrak{g}_{-1}$ to irreducible components is given by $\mathfrak{g}_{-1}=\oplus_j \mathfrak{g}_{-1}^{(j)}$;
\item the only non isomorphic one gradings of simple complex and real Lie algebras are those in the table \ref{tab1};
\item  there is equivalence of categories between AHS--structures of type $(G,P)$ and regular normal one graded parabolic geometries of type $(G,P)$.
\end{itemize}
\end{prop}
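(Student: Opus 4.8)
For the first two assertions the plan is to reduce everything to the standard structure theory of $|1|$-graded semisimple Lie algebras. Start from the decomposition $\mathfrak{g}=\bigoplus_j\mathfrak{g}^{(j)}$ of the semisimple algebra into simple ideals. The grading is recorded by the grading element $E\in\mathfrak{g}_0$, characterized by $[E,X]=iX$ for $X\in\mathfrak{g}_i$; writing $E=\sum_jE^{(j)}$ with $E^{(j)}\in\mathfrak{g}^{(j)}$, each $E^{(j)}$ is itself a grading element for $\mathfrak{g}^{(j)}$, so the grading of $\mathfrak{g}$ is the direct sum of the induced $|1|$-gradings $\mathfrak{g}^{(j)}=\mathfrak{g}^{(j)}_{-1}\oplus\mathfrak{g}^{(j)}_0\oplus\mathfrak{g}^{(j)}_1$ (the hypothesis that no simple ideal lies inside $\mathfrak{g}_0$ forces every $E^{(j)}\neq 0$). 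Since $[\mathfrak{g}^{(i)},\mathfrak{g}^{(j)}]=0$ for $i\neq j$, the action of $\mathfrak{g}_0$ on $\mathfrak{g}^{(j)}_{-1}$ factors through $\mathfrak{g}^{(j)}_0$, which gives $\mathfrak{g}_{-1}=\bigoplus_j\mathfrak{g}^{(j)}_{-1}$; it then remains to check that each summand is $\mathfrak{g}^{(j)}_0$-irreducible. Over $\mathbb{C}$ this is the familiar fact that the abelian nilradical of a parabolic subalgebra is irreducible under its Levi factor: the set of roots occurring in $\mathfrak{g}^{(j)}_1$ has a unique maximal element, namely the highest root of $\mathfrak{g}^{(j)}$, and a highest weight vector for it generates the whole module under the simple root vectors of $\mathfrak{g}^{(j)}_0$; the real case follows since the relevant $G_0$-module is defined over $\mathbb{R}$.

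For the third bullet the classification is purely combinatorial. Up to isomorphism (equivalently, up to diagram automorphism) a $|1|$-grading of a complex simple Lie algebra is the same datum as a single node $\alpha$ of the Dynkin diagram such that the coefficient of $\alpha$ in the highest root equals $1$; running through the diagrams $A_\ell,\dots,G_2$ and reading off the highest root produces a short finite list and shows that $G_2$, $F_4$, $E_8$ admit none. The real forms are then obtained by retaining those nodes that are compatible with the Satake diagram of the given real form, and reading off $\mathfrak{g}_0$ and the $G_0$-module $\mathfrak{g}_{-1}$ in each case; this recovers Table \ref{tab1}. Among the assertions one actually checks by hand I expect this last bookkeeping over all real forms to be the most laborious, although it is entirely routine.

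The fourth bullet is the one genuine theorem, and here I would simply invoke \cite{parabook}. It is proved there by a Tanaka-type prolongation of the underlying first order structure together with the normalization of the curvature via the Kostant codifferential $\partial^*$ on $\Lambda^*\mathfrak{g}_{-1}^*\otimes\mathfrak{g}$; in the $|1|$-graded case regularity holds automatically, since every homogeneous component of the curvature has homogeneity at least $1$, so the underlying object is (by definition) the AHS--structure. This prolongation and normalization machinery is the deep input, and I would not reprove it here.
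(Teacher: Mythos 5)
Your outline is correct, but note that the paper offers no proof of this proposition at all: it is stated purely as a summary of known results quoted from \cite{parabook}, so there is nothing to compare against beyond that citation. Your sketch (grading element and its decomposition over the simple ideals, irreducibility of the abelian nilradical via the highest root, classification by the nodes of coefficient one in the highest root together with the Satake-diagram conditions for real forms, and Tanaka prolongation with normalization by $\partial^*$ for the equivalence of categories) is exactly how these facts are established in \cite{parabook}, so you have in effect reconstructed the proofs the paper delegates to its reference.
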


We use the following symbols and abbreviations in the table \ref{tab1}:

\begin{itemize}
\item[$\mathfrak{g}$] is simple Lie algebra with one grading $\mathfrak{g}=\mathfrak{g}_{-1}+\mathfrak{g}_0+\mathfrak{g}_1$
\item[$\mathfrak{g}_{-1}$] is given as a representation of $ad(\mathfrak{g}_0)$ on $\mathfrak{g}_{-1}$ in terms of fundamental representations.
\end{itemize}

\begin{table}
{
\renewcommand{\arraystretch}{1.2}
\begin{center}
\begin{tabular}{|c|c|c|}
\hline
$\mathfrak{g}$ & $\mathfrak{g}_0$ & $\mathfrak{g}_{-1}$\\
\hline
$\mathfrak{sl}(n,\mathbb{R})$ & $\mathfrak{sl}(p,\mathbb{R})+\mathfrak{sl}(q,\mathbb{R})+\mathbb{R}$ & $\lambda_{p-1}\otimes \lambda_{1}$ \\
\hline
$\mathfrak{sl}(n,\mathbb{C})$ & $\mathfrak{sl}(p,\mathbb{C})+\mathfrak{sl}(q,\mathbb{C})+\mathbb{C}$ & $\lambda_{p-1}\otimes \lambda_{1}$ \\
\hline
$\mathfrak{sl}(n,\mathbb{H})$ & $\mathfrak{sl}(p,\mathbb{H})+\mathfrak{sl}(q,\mathbb{H})+\mathbb{R}$ & $\lambda_{p-1}\otimes \lambda_{1}$ \\
\hline
$\mathfrak{su}(n,n)$ & $\mathfrak{sl}(n,\mathbb{C})+\mathbb{R}$ & $\lambda_{1}+ \bar{\lambda}_{1}$ \\
\hline
$\mathfrak{sp}(2n,\mathbb{R})$ &  $\mathfrak{sl}(n,\mathbb{R})+\mathbb{R}$ & $2\lambda_{1}$  \\
\hline
$\mathfrak{sp}(2n,\mathbb{C})$ &  $\mathfrak{sl}(n,\mathbb{C})+\mathbb{C}$ & $2\lambda_{1}$ \\
\hline
$\mathfrak{sp}(n,n)$ &  $\mathfrak{sl}(n,\mathbb{H})+\mathbb{R}$ & $2\lambda_{1}$  \\
\hline
$\mathfrak{so}(p+1,q+1)$ & $\mathfrak{so}(p,q)+\mathbb{R}$ & $\lambda_{1}$ \\
\hline
$\mathfrak{so}(n+2,\mathbb{C})$ & $\mathfrak{so}(n,\mathbb{C})+\mathbb{C}$ & $\lambda_{1}$ \\
\hline
$\mathfrak{so}(p,p)$ & $\mathfrak{sl}(n,\mathbb{R})+\mathbb{R}$ & $\lambda_{2}$ \\
\hline
$\mathfrak{so}(2n,\mathbb{C})$ & $\mathfrak{sl}(n,\mathbb{C})+\mathbb{C}$ & $\lambda_{2}$ \\
\hline
$\mathfrak{so}^\star(4n)$ & $\mathfrak{sl}(n,\mathbb{H})+\mathbb{R}$ & $\lambda_{2}$ \\
\hline
$\mathfrak{e}^1_6 (EI)$ & $\mathfrak{so}(5,5)+\mathbb{R}$ & $\lambda_{4}$ \\
\hline
$\mathfrak{e}^4_6 (EIV)$ & $\mathfrak{so}(9,1)+\mathbb{R}$ & $\lambda_{4}$ \\
\hline
$\mathfrak{e}^\mathbb{C}_6 (E_6)$ & $\mathfrak{so}(10,\mathbb{C})+\mathbb{C}$ & $\lambda_{4}$\\
\hline
$\mathfrak{e}^1_7 (EV)$ & $\mathfrak{e}^1_6+\mathbb{R}$ & $\lambda_{1}$ \\
\hline
$\mathfrak{e}^3_7 (EVII)$ & $\mathfrak{e}^4_6+\mathbb{R}$ & $\lambda_{1}$\\
\hline
$\mathfrak{e}^\mathbb{C}_7 (E_7)$ & $\mathfrak{e}^\mathbb{C}_6+\mathbb{C}$ & $\lambda_{1}$ \\
\hline
\end{tabular}
\end{center}
}
\caption{Types of AHS--structures}
\label{tab1}
\end{table}

\section{Construction of AHS--structures on locally symmetric spaces}

Now we investigate, how the extensions $(i,\alpha)$ of a symmetric space $(K,H,h)$ to a AHS--structure of type $(G,P)$ can look like up to equivalence. The following lemma characterizes, when two extension leads to locally equivalent geometries, look in \cite{greg} for proof.

\begin{lem}\label{equiv}
Let $(i,\alpha)$ and $(\hat{i},\hat{\alpha})$ be two extension of $(K,H)$ to $(G,P)$. Then the extended geometries are locally equivalent if and only if there are $p_0\in P$ and a Lie algebra automorphism $\sigma$ of $K$ preserving $H$ such, that $\hat{i}(\sigma(h))=p_0i(h)p_0^{-1}$ and $\hat{\alpha}=Ad_{p_0^{-1}}\circ \alpha\circ T\sigma$.
\end{lem}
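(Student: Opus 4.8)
The plan is to transport the question to the homogeneous model and there turn it into an algebraic statement about the Maurer--Cartan form. By local flatness and Proposition~\ref{p1} I may assume I am working over an open piece of the model $K/H$, so that $\mathcal{G}$ is an open subset of $K$, $\omega$ is the Maurer--Cartan form $\omega_K$, the extended bundle is $\mathcal{G}\times_iP$, and $\omega_\alpha$ is the unique $P$-equivariant Cartan connection with $\omega_\alpha=\alpha\circ\omega_K$ on the image of the $e$-section (and similarly for $(\hat i,\hat\alpha)$). Two computational facts will be used throughout: for a Lie group automorphism $\sigma$ of $K$ one has $\sigma^*\omega_K=T_e\sigma\circ\omega_K$, and pulling a Cartan connection back along the right translation $R_{p_0}$ multiplies it by $\mathrm{Ad}_{p_0^{-1}}$. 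I also note that the two displayed relations in the statement are symmetric under simultaneously swapping $(i,\alpha)\leftrightarrow(\hat i,\hat\alpha)$ and replacing $(\sigma,p_0)$ by $(\sigma^{-1},p_0^{-1})$; this lets me ignore a harmless inversion ambiguity below.

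For the implication ``$\Leftarrow$'', given $p_0$ and $\sigma$ I define the principal $P$-bundle map $\Phi\colon\mathcal{G}\times_iP\to\mathcal{G}\times_{\hat i}P$, $\Phi([(u,p)]_i)=[(\sigma(u),p_0p)]_{\hat i}$, covering the diffeomorphism of $M$ induced by $\sigma$. Well-definedness along the $H$-orbits, i.e. $[(uh,p)]_i$ and $[(u,i(h)p)]_i$ having the same image, is exactly the first relation $\hat i(\sigma(h))=p_0i(h)p_0^{-1}$. Since $\Phi$ is $P$-equivariant, $\Phi^*\omega_{\hat\alpha}$ is again a $P$-equivariant Cartan connection, so it is enough to check $\Phi^*\omega_{\hat\alpha}=\omega_\alpha$ on the image of the $e$-section; there one computes $\Phi^*\omega_{\hat\alpha}=\mathrm{Ad}_{p_0^{-1}}\circ\hat\alpha\circ T_e\sigma\circ\omega_K$, which equals $\alpha\circ\omega_K$ precisely by the second relation (up to the inversion ambiguity noted above). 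Thus $\Phi$ is a local equivalence.

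For the implication ``$\Rightarrow$'', let $\Phi$ be a local equivalence of the two extended geometries. The first and main task is to recover inside each extended geometry the distinguished $H$-reduction, namely the image of $\mathcal{G}$, on which $\omega_\alpha$ takes values in $\mathrm{Im}(\alpha)$. By Theorem~\ref{t1} the extended geometry carries the local symmetries of the locally symmetric space, and the involution $h$ together with the hypothesis that $H$ is open in its centraliser single this reduction out intrinsically (e.g.\ as the fixed-point locus of the local symmetry over each point; this is where the structure theory of \cite{greg} enters). Consequently $\Phi$ carries the reduction of $(\mathcal{G}\times_iP,\omega_\alpha)$ to one of $(\mathcal{G}\times_{\hat i}P,\omega_{\hat\alpha})$ of the same type; such a reduction differs from the $e$-section reduction by a $P$-valued gauge, and because Cartan connections reproduce the fundamental vector fields and $M$ is connected this gauge is a constant $p_0\in P$. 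Restricting $\Phi$ to this reduction and composing with the $p_0$-shift yields, in the model $\mathcal{G}=U\subseteq K$, a diffeomorphism $\psi$ with $\mathrm{Ad}_{p_0^{-1}}\circ\hat\alpha\circ\psi^*\omega_K=\alpha\circ\omega_K$; hence $\psi^*\omega_K=L\circ\omega_K$ for a fixed linear automorphism $L$ of $\mathfrak{k}$. Pulling back the Maurer--Cartan equation forces $L[X,Y]=[LX,LY]$, so on the simply connected cover (allowed by Proposition~\ref{p1} and its corollary) $\psi=L_k\circ\sigma$ for a Lie group automorphism $\sigma$ of $K$ with $T_e\sigma=L$, and the left translation $L_k$ — itself an automorphism of $(\mathcal{G},\omega_K)$ — can be absorbed. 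The $H$-equivariance of $\Phi$ then gives $\hat i(\sigma(h))=p_0i(h)p_0^{-1}$, and $L=T_e\sigma$ together with $\mathrm{Ad}_{p_0^{-1}}\circ\hat\alpha\circ L=\alpha$ rearranges (again up to the inversion ambiguity) to $\hat\alpha=\mathrm{Ad}_{p_0^{-1}}\circ\alpha\circ T\sigma$.

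I expect the genuine obstacle to be the first half of the ``$\Rightarrow$'' step: giving the clean intrinsic characterisation of the distinguished $H$-reduction inside the extended geometry and showing that the residual $P$-gauge relating $\Phi$ of it to the $e$-section reduction is globally constant. This is exactly the point where one must use that one is dealing with a locally symmetric space — the involution $h$, the openness of $H$ in the centraliser of $h$, and the local transitivity of $K$ acting by local symmetries — rather than merely formal properties of the extension functor; all the rest is bookkeeping with the Maurer--Cartan form.
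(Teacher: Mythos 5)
First, note that the paper itself offers no proof of this lemma---it explicitly defers to \cite{greg}---so your argument can only be judged on its own merits. Your ``$\Leftarrow$'' direction is correct and is the standard argument: the map $[(u,p)]_i\mapsto[(\sigma(u),p_0p)]_{\hat i}$ is well defined exactly when the first displayed relation holds, and the pullback computation on the $e$-section yields exactly the second. Your remark about the inversion ambiguity is also well taken: as literally printed, the two relations in the lemma are not mutually consistent when restricted to $\mathfrak{h}$ (differentiating the first gives $T\hat i\circ T\sigma=Ad_{p_0}\circ Ti$, while the second restricted to $\mathfrak h$ gives $T\hat i\circ T\sigma = Ad_{p_0^{-1}}\circ Ti\circ T\sigma\circ T\sigma$), so one of $p_0$, $\sigma$ must appear inverted; the pair of conditions you actually derive is the internally consistent version, and since local equivalence is symmetric this costs nothing.

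The gap is in the ``$\Rightarrow$'' direction, precisely where you flag it, and the mechanism you propose would not close it. The claim that the canonical $H$-reduction $j(\mathcal{G})\subset\mathcal{G}\times_iP$ is recovered intrinsically as ``the fixed-point locus of the local symmetry over each point'' fails: over the base point the lifted symmetry acts on the fibre $\cong P$ by left translation by $i(h)$, which has no fixed points unless $i(h)=e$. More seriously, in the locally flat cases (Grassmannian, Lagrangean, spinorial structures---half of this paper) the extended geometry is locally isomorphic to $G/P$ and admits local automorphisms from all of $G$, which do not preserve the $K$-orbit structure; so if ``locally equivalent'' meant an arbitrary equivalence of $(G,P)$-geometries the forward implication would simply be false, and the lemma must be read as asserting equivalence by a map compatible with the symmetric-space data, i.e.\ carrying reduction to reduction---the convention of \cite{greg}. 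Under that reading your first step becomes unnecessary, but the second step still has a hole: your argument that the residual gauge $u\mapsto p(u)\in P$ is constant (``Cartan connections reproduce the fundamental vector fields and $M$ is connected'') does not suffice. The pullback identity only gives $\alpha\circ\omega_K=Ad_{p(u)^{-1}}\circ\hat\alpha\circ T\sigma\circ\omega_K+p^*\omega_P$, and comparing modulo $\mathfrak{p}$ pins down $p(u)$ only up to the kernel of $P\to Gl(\mathfrak{g}/\mathfrak{p})$, which contains all of $\exp(\mathfrak{g}_1)$; eliminating that remaining non-constant freedom requires an honest computation with the $\mathfrak{p}$-components (or with the curvature/structure equations), which is the real content of the proof in \cite{greg} and is absent here.
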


The first result highly restricts possible AHS--structures on locally symmetric spaces.

\begin{thm}\label{4.1.4}
There is an invariant AHS--structure of type $(G,P)$ on a locally symmetric space of type $(K,H,h)$ if and only if there is an invariant $G_0$-structure on the locally symmetric space.
\end{thm}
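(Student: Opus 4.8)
The plan is to prove the two implications separately. The implication ``invariant AHS--structure $\Rightarrow$ invariant $G_0$--structure'' is immediate: by Proposition \ref{4.1.1} an AHS--structure of type $(G,P)$ has an underlying first order $G_0$--structure on $M$ (the reduction of the frame bundle obtained by factoring the Cartan bundle by $\exp(\mathfrak{g}_1)$ and soldering with $\omega \bmod \mathfrak{p}$), and every automorphism of the AHS--structure, in particular every local symmetry, acts on this $G_0$--structure. Hence the underlying $G_0$--structure is invariant.

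For the converse I first extract from an invariant $G_0$--structure the data of a would-be extension at the level of $G_0$. Passing to the homogeneous model and fixing a frame lying in the $G_0$--reduction at the base point identifies $T_{eH}(K/H)\cong\mathfrak{k}/\mathfrak{h}$ with $\mathfrak{g}_{-1}$; the fact that the isotropy representation of $H$ preserves the reduction means that it factors through a homomorphism $\bar i\colon H\to G_0$, and the identification becomes an $H$--equivariant isomorphism $\bar\alpha\colon\mathfrak{k}/\mathfrak{h}\to\mathfrak{g}_{-1}$, where $H$ acts on $\mathfrak{g}_{-1}$ through $\bar i$ and the representation from Table \ref{tab1}. (Since $h\in H$ and $\mathrm{Ad}(h)$ acts as $-\mathrm{id}$ on $\mathfrak{k}/\mathfrak{h}$, invariance under the local symmetry through the base point is already contained in this $H$--equivariance, forcing $\bar i(h)$ to act as $-\mathrm{id}_{\mathfrak{g}_{-1}}$, so no separate condition on $h$ appears.) Now I use the symmetric decomposition $\mathfrak{k}=\mathfrak{h}\oplus\mathfrak{m}$, where $\mathfrak{m}$ is the $(-1)$--eigenspace of $\mathrm{Ad}(h)$; it is $\mathrm{Ad}(H)$--invariant and $\mathfrak{m}\cong\mathfrak{k}/\mathfrak{h}$. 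I set $i:=\bar i\colon H\to G_0\subset P$ and define $\alpha\colon\mathfrak{k}\to\mathfrak{g}$ by $\alpha|_{\mathfrak{h}}:=T_e i$ (with values in $\mathfrak{g}_0$) and $\alpha|_{\mathfrak{m}}:=\bar\alpha$ (with values in $\mathfrak{g}_{-1}\subset\mathfrak{g}$). Checking the four axioms of an extension of $(K,H)$ to $(G,P)$ is then routine: $i$ is a homomorphism and $\alpha$ is linear and extends $T_e i$ by construction; it induces an isomorphism $\mathfrak{k}/\mathfrak{h}\cong\mathfrak{g}/\mathfrak{p}$ since $\alpha(\mathfrak{m})=\mathfrak{g}_{-1}$ is a complement to $\mathfrak{p}$; and $\mathrm{Ad}(i(h))\circ\alpha=\alpha\circ\mathrm{Ad}(h)$ holds on $\mathfrak{h}$ because $i$ is a homomorphism and on $\mathfrak{m}$ because $\mathfrak{m}$ is $\mathrm{Ad}(H)$--invariant, $\mathrm{Ad}(i(h))$ restricts to the $G_0$--action on $\mathfrak{g}_{-1}$, and $\bar\alpha$ is $H$--equivariant.

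By Theorem \ref{t1}, the extension $(i,\alpha)$ produces a Cartan geometry $(\mathcal{G}\times_i P\to M,\omega_\alpha)$ of type $(G,P)$ which is invariant to all local automorphisms of the symmetric space, in particular to local symmetries; as the grading is a one grading it is automatically regular. Its underlying $G_0$--structure coincides with the one we started from (and, in the projective type, $\omega_\alpha$ in addition carries the invariant projective class of connections needed to recover a parabolic geometry from the bare $G_0$--structure). Feeding this invariant underlying AHS--structure into the equivalence of categories of Proposition \ref{4.1.1} returns a regular normal one graded parabolic geometry of type $(G,P)$; since the normalisation is functorial, it is again invariant to local symmetries, and this is the desired invariant AHS--structure. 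The only genuinely non-formal step is the construction of $\alpha$ on all of $\mathfrak{k}$: a priori its $\mathfrak{g}_0$-- and $\mathfrak{g}_1$--components need not exist, and it is exactly the $\mathrm{Ad}(H)$--invariant complement $\mathfrak{m}$ supplied by the symmetric structure that lets us take $\alpha$ to map $\mathfrak{h}$ into $\mathfrak{g}_0$ and to have no $\mathfrak{g}_1$--component, so that no obstruction beyond the existence of the $G_0$--structure arises.
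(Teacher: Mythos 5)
Your proposal is correct and follows essentially the same route as the paper: the paper reduces the theorem to the characterization of extensions with $i(h)=g_0$, $i(H)\subset G_0$ and $\alpha$ mapping the $-1$ eigenspace of $\mathrm{Ad}(h)$ into $\mathfrak{g}_{-1}\oplus\mathfrak{g}_1$ (so one may take the $\mathfrak{g}_1$--component to vanish), together with Theorem \ref{t1} and the observation that $Ad(G_0)\subset Gl(\mathfrak{g}_{-1})$. Your explicit construction of $(i,\alpha)$ from the invariant $G_0$--structure, with $\alpha|_{\mathfrak{m}}$ valued in $\mathfrak{g}_{-1}$ and $\alpha|_{\mathfrak{h}}=T_ei$, is exactly the content of that reduction.
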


The theorem is the direct consequence of the following proposition and Theorem \ref{t1}, because $Ad(G_0)$ is a subgroup of $Gl(\mathfrak{g}_{-1})$.

\begin{prop}
An invariant AHS--structure of type $(G,P)$ on a locally symmetric space $(K,H,h)$ is equivalent to an extension $(i,\alpha)$ of $(K,H)$ to $(G,P)$ such that
\begin{itemize}
\item $i(h)=g_0$;
\item $i(H)\subset G_0$;
\item $\alpha$ is given for $X$ in the $-1$ eigenspace of $Ad(h)$ as follows:

$\alpha(X)_{-1}$ is an arbitrary isomorphism of the adjoint representations \linebreak $Ad(H)$ and $Ad(i(H))$, $\alpha(X)_0=0$, and $\alpha(X)_{1}$ is an arbitrary morphism of the adjoint representations.
\end{itemize}
\end{prop}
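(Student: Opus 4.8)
The plan is to combine Theorem~\ref{t1} with a normalization of the extension data carried out via Lemma~\ref{equiv}. By Theorem~\ref{t1}, the given invariant AHS--structure is (locally equivalent to) the extended geometry of some extension $(i,\alpha)$ of $(K,H)$ to $(G,P)$; the whole proof then consists in replacing this $(i,\alpha)$, inside its equivalence class from Lemma~\ref{equiv}, by one satisfying the three listed conditions, and afterwards checking the converse direction.

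\emph{Normalizing $i(h)$.} Since $h^2=\mathrm{id}_K$ and $i$ is a group homomorphism, $i(h)^2=e$ in $G$. The axiom $Ad(i(h))\circ\alpha=\alpha\circ Ad(h)$ together with the fact that $\alpha$ induces an isomorphism $\mathfrak k/\mathfrak h\cong\mathfrak g/\mathfrak p$, on which $Ad(h)$ acts by $-\mathrm{id}$, shows that $Ad(i(h))$ preserves $\mathfrak p$ and acts by $-\mathrm{id}$ on $\mathfrak g/\mathfrak p$. I would write $i(h)=g'\exp(Z)$ with $g'\in G_0$ and $Z\in\mathfrak g_1$; using that $\mathfrak g_1$ is abelian, $i(h)^2=e$ forces $g'^2=e$ and $Ad(g'^{-1})Z=-Z$, and a short computation with the $\pm1$--eigenspace decomposition of the involution $Ad(g'^{-1})$ on $\mathfrak g_1$ yields $W\in\mathfrak g_1$ with $\exp(W)\,i(h)\,\exp(-W)=g'$. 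Conjugating by $p_0=\exp(W)\in P$ (Lemma~\ref{equiv} with $\sigma=\mathrm{id}_K$, which leaves the $\mathfrak g_{-1}$--part of $\alpha$ unchanged) we may thus assume $i(h)=g'\in G_0$. Then $Ad(g')\in Ad(G_0)$ is an involution acting by $-\mathrm{id}$ on $\mathfrak g/\mathfrak p\cong\mathfrak g_{-1}$; by Killing form duality it acts by $-\mathrm{id}$ on $\mathfrak g_1$ as well, hence by $+\mathrm{id}$ on $\mathfrak g_0=[\mathfrak g_{-1},\mathfrak g_1]$ (applied to each simple summand of $\mathfrak g$). Thus $g':=g_0$ is the distinguished grading involution of $G_0$, which is the first bullet.

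\emph{Reading off $i(H)\subset G_0$ and the shape of $\alpha$.} For $k\in H$ one has $kh=hk$, so $i(k)$ commutes with $i(h)=g_0$; writing $i(k)=g_1\exp(Y)$ with $g_1\in G_0$, $Y\in\mathfrak g_1$ and using $Ad(g_0)|_{\mathfrak g_1}=-\mathrm{id}$, the commutation relation forces $Y=0$, so $i(H)\subset G_0$. Next, equivariance $Ad(g_0)\alpha(X)=\alpha(Ad(h)X)$ for $X\in\mathfrak k=\mathfrak h\oplus\mathfrak m$ is evaluated against the eigenspace decomposition of $Ad(g_0)$ ($+\mathrm{id}$ on $\mathfrak g_0$, $-\mathrm{id}$ on $\mathfrak g_{\pm1}$): for $X\in\mathfrak h$ this gives $\alpha(X)\in\mathfrak g_0$ (so $\alpha|_{\mathfrak h}=T_ei$ is $\mathfrak g_0$--valued, consistent with $i(H)\subset G_0$), and for $X\in\mathfrak m$ it gives $\alpha(X)_0=0$. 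The assignment $X\mapsto\alpha(X)_{-1}$ is the composite $\mathfrak m\cong\mathfrak k/\mathfrak h\xrightarrow{\ \alpha\ }\mathfrak g/\mathfrak p\cong\mathfrak g_{-1}$, an isomorphism by the axiom that $\alpha$ induces an isomorphism $\mathfrak k/\mathfrak h\cong\mathfrak g/\mathfrak p$; it intertwines $Ad(H)$ with $Ad(i(H))$ because $\alpha$ is $H$--equivariant and $i(H)\subset G_0$ preserves the grading, and the same argument shows $X\mapsto\alpha(X)_1$ is a morphism of $H$--representations $\mathfrak m\to\mathfrak g_1$.

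\emph{Converse.} Conversely, given $i\colon H\to G_0$ with $i(h)=g_0$, an arbitrary $H$--module isomorphism $\mathfrak m\to\mathfrak g_{-1}$, an arbitrary $H$--module morphism $\mathfrak m\to\mathfrak g_1$, and $\alpha(X)_0=0$ on $\mathfrak m$, the map $\alpha\colon\mathfrak k\to\mathfrak g$ defined by $\alpha|_{\mathfrak h}=T_ei$ and by these data on $\mathfrak m$ satisfies all four axioms of an extension (it extends $T_ei$; it induces an isomorphism $\mathfrak k/\mathfrak h\cong\mathfrak g/\mathfrak p$ since its $\mathfrak g_{-1}$--part is an isomorphism; it is $H$--equivariant since each graded component is), so $(i,\alpha)$ is an extension, and by Theorem~\ref{t1} it carries an invariant geometry of type $(G,P)$. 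Since the underlying $G_0$--structure of an extended geometry is built only from $i$ and from the $\mathfrak g_{-1}$--part of $\alpha$ (the $P_+$--quotient of $\mathcal G\times_iP$ is $\mathcal G\times_iG_0$ and the soldering form uses $\alpha(\cdot)_{-1}$), the $\mathfrak g_1$--part $\alpha(X)_1$ is genuinely unconstrained by the AHS--structure. I expect the main obstacle to be the normalization step: performing the $\exp(\mathfrak g_1)$--conjugation pinning $i(h)$ to $g_0$, verifying that $Ad(g_0)$ is exactly the grading involution, and checking that this passage to normal form does not change the original invariant AHS--structure — for which one uses that Lemma~\ref{equiv} produces locally equivalent geometries and that the underlying first--order structure ignores $\alpha(\cdot)_1$.
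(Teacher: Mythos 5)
Your proof is correct and follows essentially the same route as the paper: write $i(h)=g_0\exp(Z)$, use $i(h)^2=e$ and conjugation by $\exp(\tfrac12 Z)\in P$ (your $\exp(W)$) via Lemma~\ref{equiv} to arrange $i(h)\in G_0$, then read off $i(H)\subset G_0$ from commutation with $i(h)$ and $\alpha(X)_0=0$ from the equivariance axiom and the $\pm1$ eigenspace decomposition of $Ad(g_0)$. Your version is in fact slightly more complete, since you justify $Ad(g_0)Z=-Z$ from the extension axioms and also check the converse direction, both of which the paper leaves implicit.
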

\begin{proof}
Let $(i,\alpha)$ be extension of $(K,H,h)$ to AHS--structure of type $(G,P)$, then $i(h)=g_0\exp(Z)$ for $g_0\in G_0$ and $Z\in \mathfrak{g}_1$, and $$e=i(h)^2=g_0\exp(Z)g_0\exp(Z)=g_0^2\exp(Ad(g_0)Z)\exp(Z)=g_0^2\exp(-Z+Z)=g_0^2$$ so we have to assume, that there is $g_0\in G_0,\ g_0^2=e$. Now, we change the extension by conjugation by $\exp(\frac12Z)\in P$ and get equivalent extension according to the above lemma. Then $i(h)=\exp(\frac12Z)g_0\exp(Z)\exp(-\frac12Z)=g_0\exp(Ad(g_0)(\frac12Z))\exp(\frac12Z)=g_0$, thus we can assume that $i(h)=g_0$ without loss of generality.

Since $h$ commutes with $H$, the equality $$g_0p_0\exp(Y)=p_0\exp(Y)g_0$$ has to be satisfied for all $p_0\exp(Y)\in i(H)$. Thus $Y=0$ for all $p_0\exp(Y)\in i(H)$ and $i(H)\subset G_0$.

Now $\alpha(Ad(h)X)=Ad(i(h))\alpha(X)$ for $X$ in the $-1$ eigenspace of $Ad(h)$, thus $$-\alpha(X)=Ad(g_0)\alpha(X).$$  Let us decompose $\alpha(X)=\alpha(X)_{-1}+\alpha(X)_0+\alpha(X)_1$ according to the grading of $\mathfrak{g}$. Then the comparison of both sides provides us restriction $\alpha(X)_{0}=0.$
\end{proof}

Now we solve, when two AHS--structures on a locally symmetric space are locally equivalent.

\begin{thm}
On a locally symmetric space of type $(K,H,h)$, there is a bijection between:
\begin{itemize}
\item equivalence classes of invariant AHS--structure of type $(G,P)$ (up to outer automorphisms of the Lie group $Ad(H)$ induced by automorphisms of $K$).

\item pairs consisting of a conjugacy class of inclusions $i$ of $H$ to $G_0$ and a class of elements of the centralizer of $i(H)$ in $Gl(\mathfrak{g}_{-1})$ contained in $Gl(\mathfrak{g}_{-1})/G_0$.
\end{itemize}
\end{thm}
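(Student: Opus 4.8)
The plan is to represent each invariant AHS--structure by the normal--form extension produced in the preceding proposition, to strip off the part of its data that is invisible to the underlying structure, and then to read off the induced equivalence relation from Lemma \ref{equiv}.

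\emph{Reduction to a pair.} By the preceding proposition, every invariant AHS--structure of type $(G,P)$ is equivalent to an extension $(i,\alpha)$ with $i(H)\subset G_0$, $i(h)=g_0\in G_0$ and, writing $\mathfrak{k}=\mathfrak{h}\oplus\mathfrak{m}$ for the $(+1,-1)$--eigenspace decomposition of $Ad(h)$, with $\alpha|_{\mathfrak m}=\phi_{-1}+\phi_1$ where $\phi_{-1}\colon\mathfrak{m}\to\mathfrak{g}_{-1}$ is an isomorphism intertwining $Ad(H)$ and $Ad(i(H))$ and $\phi_1\in\operatorname{Hom}_H(\mathfrak{m},\mathfrak{g}_1)$. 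The first point is that $\phi_1$ does not affect the AHS--structure: by the equivalence of categories between AHS--structures and regular normal one graded parabolic geometries recalled above, the structure is determined by the underlying $G_0$--structure, namely the reduction $(\mathcal{G}\times_iP)/\exp(\mathfrak g_1)=\mathcal{G}\times_iG_0$ with soldering form induced by the $\mathfrak g_{-1}$--component $\alpha_{-1}\circ\omega=\phi_{-1}\circ\omega|_{\mathfrak m}$, and neither of these involves $\phi_1$. Since replacing $\alpha$ by $T_ei\oplus\phi_{-1}$ (that is, taking $\phi_1=0$) again yields an extension with the same underlying $G_0$--structure, I shall henceforth classify pairs $(i,\phi_{-1})$. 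Fixing one isomorphism $\phi_{-1}^0\colon\mathfrak m\xrightarrow{\ \sim\ }\mathfrak g_{-1}$ intertwining $Ad(H)$ and $Ad(i(H))$ — whose existence is exactly the hypothesis that an invariant $G_0$--structure exists, cf.\ Theorem \ref{4.1.4} — every admissible $\phi_{-1}$ has the form $A\circ\phi_{-1}^0$ with $A$ an automorphism of $\mathfrak g_{-1}$ as an $Ad(i(H))$--module, i.e.\ $A\in Z_{Gl(\mathfrak g_{-1})}(Ad(i(H)))$; so the data is a pair $\big(i,\,A\in Z_{Gl(\mathfrak g_{-1})}(Ad(i(H)))\big)$.

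\emph{The equivalence relation.} Two such pairs give locally equivalent AHS--structures iff, by Lemma \ref{equiv}, there are $p_0=g_0'\exp(Z)\in P$ (with $g_0'\in G_0$, $Z\in\mathfrak g_1$) and an automorphism $\sigma$ of $K$ preserving $H$ with $\hat i\circ\sigma=\operatorname{conj}_{p_0}\circ i$ on $H$ and $\hat\alpha=Ad_{p_0^{-1}}\circ\alpha\circ T\sigma$. From the preceding proposition, $Ad(g_0)=-\operatorname{id}$ on $\mathfrak g_{-1}$, hence also on $\mathfrak g_1$ by the duality $\mathfrak g_1\cong\mathfrak g_{-1}^*$ of $G_0$--modules; since $\hat i(\sigma(h))\in G_0$ and $p_0g_0p_0^{-1}=g_0'g_0(g_0')^{-1}\exp(-2\,Ad(g_0')Z)$, we must have $Z=0$, so $p_0=g_0'\in G_0$. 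A short check with the normal form then shows $T\sigma$ maps $\mathfrak m$ onto itself, and extracting the $\mathfrak g_{-1}$-- and group--parts of the two relations gives $\hat i\circ(\sigma|_H)=\operatorname{conj}_{g_0'}\circ i$ and $\hat\phi_{-1}=Ad_{(g_0')^{-1}}\circ\phi_{-1}\circ(T\sigma|_{\mathfrak m})$. Transporting $T\sigma|_{\mathfrak m}$ through $\phi_{-1}^0$ turns it into an element of $Gl(\mathfrak g_{-1})$ that normalizes $Ad(i(H))$ and realizes the automorphism of $Ad(H)$ induced by $\sigma|_H$; absorbing this into the ambiguity by outer automorphisms of $Ad(H)$ induced by automorphisms of $K$ that is allowed in the statement, the relation collapses to: $\hat i$ is $G_0$--conjugate to $i$, and then $\hat A=Ad_{(g_0')^{-1}}\circ A$ with $g_0'\in G_0$. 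Hence, up to the admitted outer automorphisms, an invariant AHS--structure is recorded precisely by the conjugacy class of the inclusion $i\colon H\hookrightarrow G_0$ together with the class of $A$ in the image of $Z_{Gl(\mathfrak g_{-1})}(Ad(i(H)))$ in $Gl(\mathfrak g_{-1})/G_0$, where $G_0$ sits in $Gl(\mathfrak g_{-1})$ via its action on $\mathfrak g_{-1}$ — which is the pair in the statement.

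\emph{Well--definedness, surjectivity, and the main obstacle.} Surjectivity is easy: a pair $(i,[A])$ is realized by the extension $(i,\,A\circ\phi_{-1}^0)$ with $\phi_1=0$, which satisfies the extension axioms by the remarks above and, via the preceding proposition, defines an invariant AHS--structure with the prescribed data; injectivity is the computation of the previous paragraph read backwards, and well--definedness of the assignment is that same computation. The step I expect to be the real obstacle is the precise bookkeeping of the automorphism $\sigma$ of $K$: one has to verify that the automorphism it induces on $H$ — equivalently, its effect on $\mathfrak m$ transported to $\mathfrak g_{-1}$ — contributes nothing beyond $G_0$--conjugations already recorded in the conjugacy class of $i$ and exactly the outer automorphisms of $Ad(H)$ quotiented out in the statement, and, conversely, that every such outer automorphism does act nontrivially on the data; this is also what identifies the group acting on $A$ with (the relevant part of) the image of $G_0$ in $Gl(\mathfrak g_{-1})$, so that $Gl(\mathfrak g_{-1})/G_0$ is the correct target. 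Everything else — the irrelevance of $\phi_1$, the vanishing $Z=0$, and the bookkeeping through $\phi_{-1}^0$ — follows directly from the preceding proposition, Lemma \ref{equiv} and Theorem \ref{4.1.4}.
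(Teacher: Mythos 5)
Your overall strategy coincides with the paper's: normalize the extension using the preceding proposition, discard the $\mathfrak g_1$--part $\phi_1$ (the paper's $b_2$), and then read the equivalence relation off Lemma \ref{equiv}, which yields $Z=0$, $p_0\in G_0$, and the pair (conjugacy class of $i$, class of $A$ in $Gl(\mathfrak g_{-1})/G_0$). The computation forcing $p_0\in G_0$ and the parametrization of the admissible $\phi_{-1}$ by the centralizer $Z_{Gl(\mathfrak g_{-1})}(Ad(i(H)))$ are correct and are essentially what the paper means by ``a simple consequence of Proposition \ref{p4} and Lemma \ref{equiv}''.

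There is, however, one genuine gap, and it is exactly the point on which the paper's own proof spends almost all of its effort. You dispose of $\phi_1$ by asserting that the AHS--structure ``is determined by the underlying $G_0$--structure'' $\mathcal G\times_iG_0$ with its soldering form. That is true for every one--graded type \emph{except} the projective and H--projective ones, where $G_0=GL(n,\mathbb K)$ and the underlying $G_0$--structure is the full frame bundle, hence carries no information: taken literally, your argument would show that any two projective structures on a manifold are equivalent. For these types the equivalence of categories is with projective classes of connections, not with $G_0$--structures, and the irrelevance of $\phi_1$ needs a separate justification. The paper supplies one: it fixes the frame $\beta$ and shows by an explicit computation with the normality condition $\partial^*\kappa=0$ that the homogeneous system for $b_2$ (equations $(1-n)b_{ii}=0$, $b_{ji}-nb_{ij}=0$, $b_{ij}-nb_{ji}=0$) has only the trivial solution, so $b_2$ is uniquely determined and contributes nothing to the classification. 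An alternative repair consistent with your setup would be to note that changing only the $\mathfrak g_1$--component of the Cartan connection leaves every Weyl connection $\sigma^*\omega_0$, and hence the induced projective class, unchanged; but some such argument must be made, since the blanket appeal to the underlying $G_0$--structure fails precisely in the (H--)projective case.
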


The theorem is a simple consequence the following technical proposition and Lemma \ref{equiv}.

\begin{prop}\label{p4}
For a symmetric space $(K,H,h)$, there is a bijection between:
\begin{itemize}
\item extensions of $(K,H)$ to AHS--structure of type $(G,P)$ such, that $i(h)=g_0$

\item couples $\beta, b_2$, where $\beta$ is a frame of  the $-1$ eigenspace $\mathfrak{g}_{-1}$ of $Ad(h)$ such, that the inclusion $i_\beta: H\to Gl(\mathfrak{g}_{-1})$ induced by the frame $\beta$ is contained in $G_0$, and $b_2$ is an endomorphism of $\mathfrak{g}_1$ commuting with $i_\beta(Ad(H))$.
\end{itemize}

Two frames of the $-1$ eigenspace of $Ad(h)$ determine the same homomorphism $i: H\to G_0$ if and only if the transition map between them commutes with $i(H)$.

Two frames of the $-1$ eigenspace of $Ad(h)$ determine equivalent AHS--structures of type $(G,P)$ if and only if the transition map between them is composition of elements of $P$ (in fact $G_0$) and outer automorphisms of the Lie group $Ad(H)$ induced by automorphisms of $K$.
\end{prop}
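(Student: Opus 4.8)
The plan is to strip an extension $(i,\alpha)$ with $i(h)=g_0$ down to its linear data, identify that data with a pair $(\beta,b_2)$, and then feed the resulting description into Lemma~\ref{equiv} to obtain the two statements about frames.

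\textbf{The bijection.} Write $\mathfrak{k}=\mathfrak{h}\oplus\mathfrak{m}$ for the $\pm 1$ eigenspace decomposition of $Ad(h)$, so $\mathfrak{m}$ is the $-1$ eigenspace. Given an extension $(i,\alpha)$ with $i(h)=g_0$, the preceding proposition yields $i(H)\subseteq G_0$ and $\alpha|_{\mathfrak{m}}=\beta+\alpha_1$ with $\beta\colon\mathfrak{m}\to\mathfrak{g}_{-1}$ and $\alpha_1\colon\mathfrak{m}\to\mathfrak{g}_1$, the $\mathfrak{g}_0$-component vanishing. Since $\alpha$ induces an isomorphism $\mathfrak{k}/\mathfrak{h}\to\mathfrak{g}/\mathfrak{p}$ and $\alpha(\mathfrak{h})\subseteq\mathfrak{g}_0$, the component $\beta$ is a linear isomorphism $\mathfrak{m}\to\mathfrak{g}_{-1}$, i.e.\ a frame of $\mathfrak{g}_{-1}$ as in the statement. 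I would then restrict the equivariance relation $Ad(i(h))\circ\alpha=\alpha\circ Ad(h)$ to $\mathfrak{m}$ and project it onto the grading components. The $\mathfrak{g}_{-1}$-part reads $Ad(i(h))|_{\mathfrak{g}_{-1}}=\beta\circ Ad(h)|_{\mathfrak{m}}\circ\beta^{-1}=i_\beta(h)$; as $G_0$ acts faithfully on $\mathfrak{g}_{-1}$ (the standing assumption on one-gradings), this forces $i=i_\beta$. Hence $i$ is determined by $\beta$, the hypothesis $i(H)\subseteq G_0$ turns into the constraint $i_\beta(H)\subseteq G_0$ on the frame, and $i(h)=g_0$ holds automatically, both $i_\beta(h)$ and $g_0$ acting as $-\mathrm{id}$ on $\mathfrak{g}_{-1}$. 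The $\mathfrak{g}_1$-part, after substituting $Ad(h)|_{\mathfrak{m}}=\beta^{-1}\,i_\beta(h)|_{\mathfrak{g}_{-1}}\,\beta$, shows that $\alpha_1\circ\beta^{-1}\colon\mathfrak{g}_{-1}\to\mathfrak{g}_1$ intertwines the natural $i_\beta(Ad(H))$-actions, which is exactly the datum of the endomorphism $b_2$ of $\mathfrak{g}_1$ in the statement. Conversely, a pair $(\beta,b_2)$ reconstructs $i:=i_\beta$, $\alpha|_{\mathfrak{h}}:=T_e i_\beta$, and $\alpha|_{\mathfrak{m}}:=\beta+\alpha_1$ with $\alpha_1$ the corresponding map; the conditions on $\beta$ and $b_2$ are precisely the equivariance of $\alpha$, and $\alpha$ has the form forced by the preceding proposition, so $(i_\beta,\alpha)$ is such an extension. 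These assignments are mutually inverse. For the second statement, if $\beta,\beta'$ are frames with transition map $T=\beta'\circ\beta^{-1}\in Gl(\mathfrak{g}_{-1})$, then $i_{\beta'}(h)=T\,i_\beta(h)\,T^{-1}$, so $i_\beta=i_{\beta'}$ precisely when $T$ centralizes $i_\beta(H)$.

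\textbf{The equivalence statement.} Here I would use Lemma~\ref{equiv}: the extensions corresponding to $(\beta,b_2)$ and $(\hat{\beta},\hat{b}_2)$ are locally equivalent iff there are $p_0\in P$ and an automorphism $\sigma$ of $K$ with $\sigma(H)=H$ satisfying $\hat{i}(\sigma(h))=p_0\,i(h)\,p_0^{-1}$ and $\hat{\alpha}=Ad_{p_0^{-1}}\circ\alpha\circ T\sigma$. Two reductions turn this into the stated condition on $T=\hat{\beta}\circ\beta^{-1}$. First, $\hat{i}(\sigma(h))\in\hat{i}(H)\subseteq G_0$, while writing $p_0=g_0'\exp Z$ with $g_0'\in G_0$, $Z\in\mathfrak{g}_1$ and using $Ad(g_0)=-\mathrm{id}$ on $\mathfrak{g}_1$ gives $p_0\,g_0\,p_0^{-1}=(g_0'g_0g_0'^{-1})\exp(-2\,Ad(g_0')Z)$; uniqueness of the decomposition $P=G_0\ltimes\exp\mathfrak{g}_1$ then forces $Z=0$, so $p_0\in G_0$. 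Second, $\sigma$ preserves the Killing form of $\mathfrak{k}$ and $\mathfrak{h}$, hence also $\mathfrak{m}=\mathfrak{h}^\perp$, so $T\sigma$ restricts to an automorphism of $\mathfrak{m}$ and $T_0:=\beta\circ(T\sigma|_{\mathfrak{m}})\circ\beta^{-1}\in Gl(\mathfrak{g}_{-1})$ normalizes $i_\beta(Ad(H))$, realizing on it the automorphism $h\mapsto\sigma(h)$. Reading off the $\mathfrak{g}_{-1}$-component of $\hat{\alpha}=Ad_{p_0^{-1}}\circ\alpha\circ T\sigma$ on $\mathfrak{m}$, using $\pi_{-1}\circ\alpha=\beta\circ\pi_{\mathfrak{m}}$ (valid since $\alpha(\mathfrak{h})\subseteq\mathfrak{g}_0$), then gives $T=Ad_{p_0^{-1}}|_{\mathfrak{g}_{-1}}\circ T_0$, a composition of an element of $G_0$ with an element realizing an automorphism of $i_\beta(Ad(H))$ induced by an automorphism of $K$; conjugations by elements of $i_\beta(H)\subseteq G_0$ lie in both factors, which is why only the outer automorphisms carry new information. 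Conversely, a transition map of this shape supplies $p_0\in G_0$ and $\sigma$ which, with the $b_2$-data transported accordingly, fulfil the hypotheses of Lemma~\ref{equiv}.

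\textbf{Expected main obstacle.} The delicate part is the equivalence statement, in particular the two reductions: forcing $p_0$ into $G_0$ out of the normalization $i(h)=\hat{i}(h)=g_0$ together with the semidirect structure of $P$, and verifying that the automorphism $\sigma$ of $K$ produced by Lemma~\ref{equiv} descends, through the frame $\beta$, to a well-defined element of $Gl(\mathfrak{g}_{-1})$ normalizing $i_\beta(Ad(H))$, as well as sorting out which automorphisms of $i_\beta(Ad(H))$ are inner (hence already present in $G_0$). The bijection itself is an unpacking of the preceding proposition, and the second statement is a one-line computation.
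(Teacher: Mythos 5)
Your bijection and your treatment of the second claim follow essentially the same route as the paper, which disposes of both in one line as consequences of the preceding proposition and Lemma~\ref{equiv}; your two reductions for the equivalence statement (forcing $p_0\in G_0$ from $i(h)=\hat{i}(h)=g_0$ and the semidirect structure of $P$, and transporting $\sigma$ through the frame) are also sound. The genuine gap sits exactly where you wrote ``with the $b_2$-data transported accordingly''. The third claim asserts that equivalence of the resulting AHS--structures is detected by the \emph{frames alone}, whereas under your own bijection an extension is a pair $(\beta,b_2)$, and Lemma~\ref{equiv} --- once $p_0$ is forced into $G_0$ --- also imposes a condition on the $\mathfrak{g}_1$-components, namely $\hat{\alpha}_1=Ad_{p_0^{-1}}\circ\alpha_1\circ T\sigma$. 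So your argument proves a statement about pairs $(\beta,b_2)$, not about frames: two extensions with the same frame but unrelated $b_2$'s are \emph{not} equivalent as Cartan geometries via Lemma~\ref{equiv}, and you never explain why they nevertheless determine equivalent AHS--structures. Your closing remark even identifies the ``main obstacle'' elsewhere, which confirms the point was missed rather than suppressed.

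The paper closes this gap in two steps. For every AHS--structure except the (H--)projective ones it notes that the $b_2$-part plays no role in the question of equivalence (the underlying $G_0$-structure depends only on $i_\beta$ and $\beta$ and, by the equivalence of categories with regular normal parabolic geometries, already fixes the equivalence class). For (H--)projective structures this fails, and the paper instead computes the normalization condition $\partial^*\kappa=0$ explicitly in coordinates: this yields an inhomogeneous linear system for the matrix $(b_{ij})$ of $b_2$ whose homogeneous part, $(1-n)b_{ii}=0$, $b_{ji}-nb_{ij}=0$, $b_{ij}-nb_{ji}=0$, has only the trivial solution, so $b_2$ is uniquely determined by $\beta$ and the equivalence question again reduces to the frames. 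This computation is the substantive content of the paper's proof and has no counterpart in your proposal; without it, or the underlying-structure argument for the remaining cases, the third claim is not established.
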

\begin{proof}
The first two claims immediately follows from the previous theorem and lemma, and the same holds for the third claim in all cases except (H--)projective structures, because in these cases the $b_2$-part of the extension plays no role in the question of equivalence. 

In the case of (H--)projective structures, for a fixed frame $\beta$, the $b_2$-part is determined from the normality conditions using formula and notation from \cite{greg} on the curvature $\kappa$:
\[
\begin{split}
0&=(\partial^*\kappa)([e,e])(a_iX^i+\mathfrak{p})=\sum_i [Z_i,[\alpha(a_jX^j),\alpha(X^i)]-\alpha([a_jX^j,X^i])]\\
&=\sum_{i,j} a_j([Z_i,[X^j+b_2(X^j),X^i+b_2(X^i)]-\alpha([X^j,X^i])])\\
&=\sum_{i,j} a_j((b_{ji}-nb_{ij})Z_i-[Z_i,[\alpha([X^j,X^i])])
\end{split}
\]
where $X^i$ is vector in $\mathfrak{g}_{-1}$ with $1$ on i-th row and rest $0$, $Z^i\in \mathfrak{g}_{1}$ is covector with $1$ on i-th column and rest $0$ and $b_2(X^j)=b_{ij}Z_i$. So we get system of linear equations and we know, there always has to be at least one solution. The homogeneous parts of the equations are $(1-n)b_{ii}=0$, $b_{ji}-nb_{ij}=0$ and $b_{ij}-nb_{ji}=0$. Clearly, $b_2=0$ is the only solution of the homogeneous part. Thus there is unique $b_2$ and the last claim follows.
\end{proof}

Since the classification of the semisimple symmetric spaces is known, we can classify all AHS--structures on them. Moreover, a simple consequence of the classification of semisimple locally symmetric spaces is, that the adjoint representation of $H$ on the $-1$ eigenspace $\mathfrak{k}/\mathfrak{h}$ of $Ad(h)$, which is complementary to $\mathfrak{h}$ in $\mathfrak{k}$, is completely reducible. Thus:

\begin{prop}
Let $(K,H,h)$ be semisimple symmetric space. Then $Z_{Gl(\mathfrak{k}/\mathfrak{h})}(Ad(H))$ is product of centralizers $Z_{Gl(\mathfrak{k}_i/\mathfrak{h}_i)}(Ad(H_i))$ of simple factors $(K_i,H_i,h_i)$ of $(K,H,h)$.
\end{prop}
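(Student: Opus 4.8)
The plan is to reduce the computation of the centralizer to the simple factors by exploiting the block structure that a direct sum decomposition of the symmetric pair imposes on the $\mathrm{Ad}(h)$-eigenspace. Write $(K,H,h)=\prod_i(K_i,H_i,h_i)$ as a product of simple factors, so that $\mathfrak{k}=\bigoplus_i\mathfrak{k}_i$, $\mathfrak{h}=\bigoplus_i\mathfrak{h}_i$, and the involution $\mathrm{Ad}(h)$ acts blockwise. Hence the $-1$ eigenspace splits as $\mathfrak{k}/\mathfrak{h}=\bigoplus_i\mathfrak{k}_i/\mathfrak{h}_i$, where each $\mathfrak{k}_i/\mathfrak{h}_i$ is the $-1$ eigenspace of $\mathrm{Ad}(h_i)$, and the adjoint action of $H$ on $\mathfrak{k}/\mathfrak{h}$ is the outer tensor product of the actions of the $H_i$ on $\mathfrak{k}_i/\mathfrak{h}_i$ (each $H_j$ with $j\neq i$ acting trivially on $\mathfrak{k}_i/\mathfrak{h}_i$).

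First I would record that, by the remark preceding the statement, each $\mathrm{Ad}(H)$-module $\mathfrak{k}/\mathfrak{h}$ is completely reducible; I would then argue that for a semisimple simple symmetric pair no irreducible $\mathrm{Ad}(H_i)$-submodule of $\mathfrak{k}_i/\mathfrak{h}_i$ is isomorphic (as an $H$-module, i.e.\ with the trivial action of the other factors) to an irreducible submodule coming from a different factor $\mathfrak{k}_j/\mathfrak{h}_j$. Indeed, an irreducible constituent of $\mathfrak{k}_i/\mathfrak{h}_i$ is a nontrivial representation of $H_i$ and trivial on all $H_j$, $j\neq i$; two such constituents attached to different indices therefore have different ``support'' among the simple factors of $H$ and cannot be isomorphic. (For the diagonal type $(\mathfrak{h}\oplus\mathfrak{h},\mathfrak{h})$ the $-1$ eigenspace is the adjoint module of $\mathfrak h$, which is nontrivial and irreducible, so the same conclusion holds.) This is the only place where the classification of semisimple symmetric pairs is genuinely used, and I expect it to be the main obstacle: one must check that every simple symmetric pair in the Berger list has the property that $\mathfrak k/\mathfrak h$, as an $H$-module, involves only constituents that are nontrivial representations of the relevant simple ideals of $\mathfrak h$ — equivalently, that no simple ideal of $\mathfrak h$ acts trivially on the whole of $\mathfrak k/\mathfrak h$. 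This follows from effectiveness of the symmetric pair together with the fact that a simple $\mathfrak k$ is generated by $\mathfrak k/\mathfrak h$ under bracket, so an ideal of $\mathfrak h$ acting trivially on $\mathfrak k/\mathfrak h$ would be a proper ideal of $\mathfrak k$; I would cite Proposition~\ref{p1} and the structure of symmetric Lie algebras here.

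With the above isotypic separation in hand, the conclusion is a standard application of Schur's lemma. By complete reducibility, $Z_{\mathrm{Gl}(\mathfrak{k}/\mathfrak{h})}(\mathrm{Ad}(H))$ is the group of units of the commutant algebra $\mathrm{End}_{\mathrm{Ad}(H)}(\mathfrak{k}/\mathfrak{h})$, and this algebra decomposes as a product over isotypic components. Since no isotypic component straddles two distinct factors $\mathfrak{k}_i/\mathfrak{h}_i$ and $\mathfrak{k}_j/\mathfrak{h}_j$, every $\mathrm{Ad}(H)$-equivariant endomorphism preserves each $\mathfrak{k}_i/\mathfrak{h}_i$ and restricts there to an $\mathrm{Ad}(H_i)$-equivariant endomorphism; conversely any tuple of such restrictions glues to an equivariant endomorphism of the whole space. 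Passing to units gives the claimed identification $Z_{\mathrm{Gl}(\mathfrak{k}/\mathfrak{h})}(\mathrm{Ad}(H))=\prod_i Z_{\mathrm{Gl}(\mathfrak{k}_i/\mathfrak{h}_i)}(\mathrm{Ad}(H_i))$, and I would close the argument there.
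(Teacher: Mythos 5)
Your overall strategy --- complete reducibility, decomposition of the commutant algebra into isotypic blocks, and the observation that no isotypic component may straddle two simple factors --- is sound and is essentially all the paper has in mind (it states the proposition as an immediate consequence of complete reducibility, with no further argument). However, there is a genuine gap in the one step you yourself flag as the crux. What you actually need is that no irreducible constituent of $\mathfrak{k}_i/\mathfrak{h}_i$ is a \emph{trivial} $H_i$-module: if two distinct factors $\mathfrak{k}_i/\mathfrak{h}_i$ and $\mathfrak{k}_j/\mathfrak{h}_j$ both contained trivial constituents, these would be isomorphic as $H$-modules, the corresponding isotypic component would straddle the two factors, and the commutant would contain operators interchanging them, destroying the product decomposition. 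You state this property correctly, but then declare it ``equivalent'' to the weaker statement that no simple ideal of $\mathfrak{h}$ acts trivially on the whole of $\mathfrak{k}/\mathfrak{h}$, and your effectiveness/ideal argument only establishes that weaker statement. The two are not equivalent: an ideal of $\mathfrak{h}_i$ can act nontrivially on $\mathfrak{k}_i/\mathfrak{h}_i$ while that module still contains a one-dimensional trivial summand, which is exactly the situation your argument must exclude.

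Fortunately the fact you need is true and admits a direct proof, with no appeal to Berger's list. Suppose $v$ lies in the $-1$ eigenspace $\mathfrak{m}=\mathfrak{k}/\mathfrak{h}$ and $[\mathfrak{h},v]=0$. For any $w\in\mathfrak{m}$ the operator $\operatorname{ad}(v)\operatorname{ad}(w)$ annihilates $\mathfrak{m}$ (it maps $\mathfrak{m}$ into $\mathfrak{h}$ and then to $0$), and on $\mathfrak{h}$ it equals $\operatorname{ad}([v,w])|_{\mathfrak{h}}$ by the Jacobi identity together with $[v,\mathfrak{h}]=0$; this is traceless because $\mathfrak{h}$ is reductive. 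Hence $B(v,w)=\operatorname{tr}(\operatorname{ad}(v)\operatorname{ad}(w))=0$ for all $w\in\mathfrak{m}$, and since also $B(\mathfrak{m},\mathfrak{h})=0$ and $B$ is nondegenerate on the semisimple $\mathfrak{k}$, we get $v=0$. Thus $(\mathfrak{k}_i/\mathfrak{h}_i)^{\mathfrak{h}_i}=0$ for every factor, every irreducible constituent of $\mathfrak{k}_i/\mathfrak{h}_i$ is a nontrivial $H_i$-module, and your isotypic-separation and Schur arguments then go through exactly as written.
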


The possible centralizers are described in the case of simple symmetric spaces in \cite{bert}. The only possibilities are $\mathbb{R}$, $\mathbb{C}$, $\mathbb{R}\times \mathbb{R}$ and $\mathbb{C}\times \mathbb{C}$.

Finally, there is a large class of AHS--structures, where the question of equivalence is trivial. This is the result of \cite{zal} summarized in the following proposition.

\begin{prop}
The AHS--structures invariant to local symmetries are torsion-free and if the only component of the harmonic curvature is torsion, then the AHS--structures are unique up to equivalence (because they are locally flat).
\end{prop}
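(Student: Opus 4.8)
The plan is to prove the two assertions in turn: first that every invariant AHS--structure is torsion--free, and then that flatness (hence local uniqueness) follows once one assumes that torsion is the only harmonic curvature component.

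\emph{Torsion--freeness.} By Theorem \ref{t1} an invariant AHS--structure comes from an extension $(i,\alpha)$ of $(K,H)$ to $(G,P)$, and by the structural proposition above I may assume $i(H)\subset G_0$ and $\alpha(X)_0=0$ for $X$ in the $-1$--eigenspace $\mathfrak m$ of $Ad(h)$. Since the locally symmetric space is locally flat, its Cartan curvature vanishes, so by the curvature formula for extensions from \cite{greg} the curvature function of the extended geometry, evaluated on $\mathfrak m\cong\mathfrak g/\mathfrak p$, is
\[
\kappa_\alpha(X,Y)=[\alpha(X),\alpha(Y)]-\alpha([X,Y]),\qquad X,Y\in\mathfrak m .
\]
For a symmetric pair $[X,Y]\in\mathfrak h$, so $\alpha([X,Y])\in\alpha(\mathfrak h)=T_ei(\mathfrak h)\subset\mathfrak g_0$; and writing $\alpha(X)=\alpha(X)_{-1}+\alpha(X)_1$, $\alpha(Y)=\alpha(Y)_{-1}+\alpha(Y)_1$, the bracket $[\alpha(X),\alpha(Y)]$ lies in $\mathfrak g_0$ as well, since in a $|1|$--grading $\mathfrak g_{\pm2}=0$ kills the $(-1,-1)$ and $(1,1)$ parts. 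Hence $\kappa_\alpha$ is $\mathfrak g_0$--valued, so it has no $\mathfrak g_{-1}$--component, i.e. no torsion. Finally I would note that every $|1|$--graded parabolic geometry is automatically regular, and that replacing $\omega_\alpha$ by the normal Cartan connection of the associated AHS--structure modifies it only by terms of positive homogeneity, which cannot affect the homogeneity--one (torsion) component; hence the AHS--structure itself is torsion--free. (Equivalently: the canonical connection of the locally symmetric space descends to an invariant Weyl connection, and since each local symmetry acts as $-\operatorname{id}$ on the tangent space it acts as $-\operatorname{id}$ on the torsion tensor, forcing that tensor to vanish.)

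\emph{Flatness when torsion is the only harmonic component.} Suppose the type $(G,P)$ is such that the only $P$--irreducible component of $H^2(\mathfrak g_{-1},\mathfrak g)$ is a torsion (homogeneity one) component. For an invariant AHS--structure the harmonic torsion vanishes by the first part, so by hypothesis the whole harmonic curvature $\kappa_H$ vanishes. For regular normal parabolic geometries $\kappa_H=0$ forces $\kappa=0$ (the lowest non--trivial homogeneous component of $\kappa$ would have to be harmonic by the algebraic Bianchi identity, a contradiction -- see \cite{parabook}), so the geometry is flat, hence locally isomorphic to the homogeneous model $G/P$. Therefore any two invariant AHS--structures of type $(G,P)$ on the locally symmetric space are locally isomorphic to $G/P$ and so to one another, which is the asserted uniqueness up to equivalence.

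The main obstacle I anticipate is the bookkeeping around normality: one must be sure that passing from the extended Cartan connection $\omega_\alpha$ to the normal one does not reintroduce torsion, which is exactly where the homogeneity of the normalizing deformation is used (and which can be bypassed altogether via the Weyl--connection remark). The remaining inputs -- the short bracket computation and the fact that a regular normal parabolic geometry with vanishing harmonic curvature is flat -- are routine once the results quoted from \cite{greg} and \cite{parabook} are in place.
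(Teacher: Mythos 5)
Your argument is correct, but it is worth noting that the paper offers no proof at all for this proposition: it is stated as a summary of a result of Zalabov\'a's \emph{Parabolic symmetric spaces} (\cite{zal}), whose own argument for torsion-freeness is essentially the one you relegate to a parenthesis -- the harmonic torsion is a tensor of odd valence, each local symmetry acts as $-\operatorname{id}$ on the tangent space at its centre, and invariance then forces the torsion to vanish pointwise. Your main route is different and self-contained within this paper's framework: you compute the curvature of the extension directly, observe that with $\alpha(X)_0=0$ and $i(H)\subset G_0$ the expression $[\alpha(X),\alpha(Y)]-\alpha([X,Y])$ is $\mathfrak g_0$-valued because $\mathfrak g_{\pm 2}=0$ and $[X,Y]\in\mathfrak h$, and then check that passing to the normal Cartan connection only changes the curvature in homogeneity $\ge 2$, leaving the homogeneity-one torsion component untouched. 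That bookkeeping is the one genuinely delicate point and you handle it correctly (a $\mathfrak g_1$-valued modification $\Phi$ contributes $[\Phi,\omega]+d\Phi+[\Phi,\Phi]$, none of which has a $\mathfrak g_{-1}$-part). The second half -- vanishing harmonic curvature implies flatness for regular normal parabolic geometries, hence local isomorphism with $G/P$ and uniqueness up to local equivalence -- is the standard Bianchi-identity argument from \cite{parabook} and is applied correctly. What your route buys is independence from \cite{zal} and consistency with the extension formalism used throughout the paper; what the cited argument buys is brevity and the stronger observation that the entire curvature of the canonical connection, not just its torsion, is constrained by the symmetries.
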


So firstly we investigate the structures, which allow non-trivial curvature and then the rest.

\section{Non-flat invariant AHS--structures}

\subsection{Projective and H--projective structures}

The (H--)projective structures correspond to the following grading of $\mathfrak{g}=\mathfrak{sl}(n+1,\mathbb{K})$ , where $\mathbb{K}=\mathbb{R}$ for projective or $\mathbb{C}$ for H--projective structures:
$$
\left( \begin{array}{cc}
a & Z  \\
X & A  
  \end{array} \right),
$$
where $A\in \mathfrak{gl}(n,\mathbb{K})$, $a=-tr(A)$, $X\in \mathbb{K}^n$ and $Z\in (\mathbb{K}^n)^*$.

The corresponding effective homogeneous model has $G=PGl(n+1,\mathbb{K})$ and
$$
g_0=\left( \begin{array}{cc}
-1 & 0  \\
0 & E  
  \end{array} \right).
$$

Let $(K,H,h)$ be a (complex) symmetric space (see \cite{bert} for details about complex symmetric spaces). Then the choice of any frame $\beta$ of the $-1$ eigenspace of $Ad(h)$ provides $i_\beta: Ad(H)\to Gl(n,\mathbb{K})=G_0$, and it is obvious, that all frames $\beta$ provide equivalent extensions. Thus:

\begin{prop}
There is (up to equivalence) unique projective structure on any semisimple locally symmetric space. There is a H--projective structure on any complex semisimple locally symmetric space. The non-equivalent (up to outer automorphisms of the Lie group $Ad(H)$ induced by automorphisms of $K$) H--projective structures are given by corollary \ref{comp}.
\end{prop}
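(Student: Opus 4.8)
The plan is to read the statement off from the shape of the (H--)projective grading together with Theorem \ref{4.1.4}, Proposition \ref{p4} and Corollary \ref{comp}. The first step is to pin down $G_0$: for $\mathfrak{g}=\mathfrak{sl}(n+1,\mathbb{K})$ with the grading written above, $\mathfrak{g}_0\cong\mathfrak{gl}(n,\mathbb{K})$ and $G_0$ acts on $\mathfrak{g}_{-1}=\mathbb{K}^n$ by the standard representation, so that the image of $Ad(G_0)$ in $Gl(\mathfrak{g}_{-1})$ is exactly $Gl(n,\mathbb{K})$ acting $\mathbb{K}$--linearly --- all of $Gl(\mathfrak{g}_{-1})$ when $\mathbb{K}=\mathbb{R}$, and the subgroup of $\mathbb{C}$--linear automorphisms when $\mathbb{K}=\mathbb{C}$. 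Thus an invariant $G_0$--structure on $(K,H,h)$ is the same as a frame $\beta$ of the $-1$ eigenspace $\mathfrak{k}/\mathfrak{h}$ of $Ad(h)$ with $i_\beta(Ad(H))\subseteq G_0$: for $\mathbb{K}=\mathbb{R}$ this is no condition, while for $\mathbb{K}=\mathbb{C}$ it says exactly that the complex structure $J_\beta:=\beta^{-1}\circ(\sqrt{-1}\cdot)\circ\beta$ on $\mathfrak{k}/\mathfrak{h}$ is $Ad(H)$--invariant. By Theorem \ref{4.1.4} and Proposition \ref{p4} the invariant (H--)projective structures with $i(h)=g_0$ are exactly such frames together with the accompanying $b_2$--part, which in these cases is pinned down uniquely by the normality conditions (as in the proof of Proposition \ref{p4}), and two frames give equivalent structures iff the transition map between them is a composition of an element of $G_0$ with an outer automorphism of $Ad(H)$ induced by an automorphism of $K$.

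For projective structures ($\mathbb{K}=\mathbb{R}$) a frame always exists, so Theorem \ref{4.1.4} produces an invariant projective structure on every semisimple locally symmetric space; and since any two frames differ by a transition map lying in $Gl(n,\mathbb{R})=G_0$, Proposition \ref{p4} gives that they determine equivalent structures, which yields the uniqueness.

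For H--projective structures ($\mathbb{K}=\mathbb{C}$) I would use that on a complex semisimple locally symmetric space the complex structure of $\mathfrak{k}$ restricts to an $Ad(H)$--invariant complex structure on $\mathfrak{k}/\mathfrak{h}$; a $\mathbb{C}$--linear frame for it satisfies $i_\beta(Ad(H))\subseteq G_0$ and makes $\alpha$ complex linear, so by Corollary \ref{comp} it yields an invariant, indeed holomorphic, H--projective structure, together with its non-equivalent complex conjugate. To see that these exhaust the possibilities, I would observe, using the equivalence criterion from Proposition \ref{p4}, that equivalence classes of invariant H--projective structures correspond to $Ad(H)$--invariant complex structures on $\mathfrak{k}/\mathfrak{h}$ modulo automorphisms of $K$ preserving $H$, and then invoke the description of $Z_{Gl(\mathfrak{k}/\mathfrak{h})}(Ad(H))$ as a product of the centralizers of the simple factors, each of which is one of $\mathbb{R}$, $\mathbb{C}$, $\mathbb{R}\times\mathbb{R}$, $\mathbb{C}\times\mathbb{C}$. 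On a simple factor the elements of the centralizer squaring to $-\mathrm{id}$ are $\pm J$ when the centralizer is $\mathbb{C}$ and $\pm J_1\oplus\pm J_2$ on its two $Ad(H)$--invariant summands when it is $\mathbb{C}\times\mathbb{C}$, so there are only finitely many invariant complex structures and each one differs from the canonical one only by applying complex conjugation on some of the simple complex factors of $K$, i.e. by the operation of Corollary \ref{comp}.

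The step I expect to be the main obstacle is the last one: verifying that the various choices of invariant complex structure really do give pairwise non-equivalent H--projective structures, with no automorphism of $K$ identifying two of them other than permutations of isomorphic simple factors. This is where Corollary \ref{comp} does the work --- it is exactly what guarantees that the complex conjugate of a holomorphic extension along a simple complex factor is not equivalent to the original --- and combining it with the equivalence criterion of Lemma \ref{equiv} is what cuts the a priori collection of frames down to the list in the statement.
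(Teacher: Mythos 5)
Your proposal is correct and follows essentially the same route as the paper, whose own argument is just the two sentences preceding the proposition: any frame $\beta$ of the $-1$ eigenspace gives $i_\beta:Ad(H)\to Gl(n,\mathbb{K})=G_0$, all frames give equivalent extensions for $\mathbb{K}=\mathbb{R}$, and the $\mathbb{C}$--case reduces to Corollary \ref{comp}. Your additional bookkeeping (the unique $b_2$ from normality, and the enumeration of invariant complex structures via the centralizers $\mathbb{R}$, $\mathbb{C}$, $\mathbb{R}\times\mathbb{R}$, $\mathbb{C}\times\mathbb{C}$) is consistent with, and more explicit than, what the paper writes; just note that ``complex symmetric space'' is taken in Bertram's sense, i.e.\ an invariant complex structure on $\mathfrak{k}/\mathfrak{h}$, which need not come from $\mathfrak{k}$ itself being a complex Lie algebra.
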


\subsection{Conformal structures}\label{4.3}

The (complex) conformal structures correspond to the following grading of $\mathfrak{g}=\mathfrak{so}(p+1,q+1)$ or $\mathfrak{g}=\mathfrak{so}(n+2,\mathbb{C})$:
$$
\left( \begin{array}{ccc}
a & Z & 0 \\
X & A & -I_{p,q}Z^T\\
0 & -X^TI_{p,q} & -a
  \end{array} \right),
$$
where $A\in \mathfrak{so}(p,q)$, $a\in \mathbb{R}$, $X\in \mathbb{R}^{p+q}$, $Z\in (\mathbb{R}^{p+q})^*$ and is diagonal matrix $I_{p,q}$ with $\pm 1$ on diagonal according to the signature $(p,q)$, or $A\in \mathfrak{so}(n,\mathbb{C})$, $a\in \mathbb{C}$, $X\in \mathbb{C}^{n}$, $Z\in (\mathbb{C}^{n})^*$ and $I_{p,q}$ is identity matrix.

The corresponding effective model has $G=PO(p+1,q+1)$ or $G=PO(n+2,\mathbb{C})$ and
$$
g_0=\left( \begin{array}{ccc}
-1 & 0 & 0 \\
0 & E & 0\\
0 & 0 & -1
  \end{array} \right).
$$

On a semisimple symmetric space $(K,H,h)$, we can restrict the Killing form $B: \mathfrak{k}\otimes \mathfrak{k}\to \mathbb{R}$ to the $H$-invariant complement $\mathfrak{k}/\mathfrak{h}$ of $\mathfrak{h}$. This defines a non-degenerate $Ad(H)$-invariant symmetric bilinear form on $T_eK/H$, which defines an inclusion $i:H\to O(n-p,p)$, where $p=\operatorname{dim}(C)-\operatorname{dim}(C\cap H)$ and $C$ is the maximal compact subgroup of $K$.

Moreover, the Killing form $B$ provides an $Ad(H)$-invariant bijection $$\mathfrak{gl}(\mathfrak{k}/\mathfrak{h})\to (\mathfrak{k}/\mathfrak{h}\times \mathfrak{k}/\mathfrak{h})^*,\ X\mapsto B(X \cdot, \cdot).$$ 
In particular, $B(X \cdot, \cdot)$ is a non-degenerate $Ad(H)$-invariant symmetric bilinear form on $T_eK/H$ if and only if $X$ is an element of the centralizer of $i(H)$ in $Gl(\mathfrak{g}_{-1})$ contained in $Gl(\mathfrak{g}_{-1})/O(n-p,p)$. Thus:

\begin{prop}
For any semisimple locally symmetric space of type $(K,H,h)$ holds:
\begin{itemize}
\item The simple factors are orthogonal to each other with respect to any invariant metric and the only non-degenerate invariant symmetric bilinear form on a simple factor is a real multiple (a complex multiple if $B$ is complex linear) of the Killing form $B$.
\item There is unique (up to conjugacy) Lie group homomorphism $i:H\to G_0=CO(p,q)$ if and only if there are multiples of the Killing forms of the simple factors such that the resulting form has signature $(p,q)$. There is bijection between the conjugacy classes of conformal structures on the locally symmetric space and $(\mathbb{S}^1)^r$, where $r$ is the number of the simple factors with complex linear Killing form.
\item There is a complex conformal structure if and only if there is a complex structure on $(K,H,h)$. The non-equivalent (up to outer automorphisms of the Lie group $Ad(H)$ induced by automorphisms of $K$) complex conformal structures are given by corollary \ref{comp}.
\end{itemize}
\end{prop}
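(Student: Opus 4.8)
The plan is to reduce the whole statement to a linear-algebra fact about $Ad(H)$-invariant symmetric bilinear forms on $\mathfrak{g}_{-1}=\mathfrak{k}/\mathfrak{h}$ and then exploit the decomposition into simple factors. By Theorem \ref{4.1.4} and Proposition \ref{p4}, and since for conformal (non-projective) structures the $b_2$-part plays no role for equivalence, an invariant conformal structure on $(K,H,h)$ is exactly a frame $\beta$ of $\mathfrak{k}/\mathfrak{h}$ with $i_\beta(Ad(H))\subset G_0=CO(p,q)$, considered up to transition maps lying in $G_0$ together with outer automorphisms of $Ad(H)$ induced by automorphisms of $K$; because $G_0=CO(p,q)$, such a $\beta$ is the same datum as an $Ad(H)$-invariant conformal class of a non-degenerate symmetric bilinear form on $\mathfrak{k}/\mathfrak{h}$ of signature $(p,q)$. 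So everything follows once we control the $Ad(H)$-invariant non-degenerate symmetric bilinear forms on $\mathfrak{k}/\mathfrak{h}$, and here the Killing-form bijection $\mathfrak{gl}(\mathfrak{k}/\mathfrak{h})\to(\mathfrak{k}/\mathfrak{h}\times\mathfrak{k}/\mathfrak{h})^*$, $X\mapsto B(X\cdot,\cdot)$, turns this into a question about the centralizer $Z:=Z_{Gl(\mathfrak{k}/\mathfrak{h})}(Ad(H))$, which we already know is the product of the centralizers $Z_i$ of the simple factors.

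For the first bullet, the product structure of $Z$ forces the simple factors to be mutually orthogonal for every invariant $g$ and reduces the classification to a single simple factor $(\mathfrak{k}_i,\mathfrak{h}_i)$: there the invariant symmetric forms correspond under the Killing-form bijection to the $B_i$-self-adjoint elements of $Z_i$, the non-degenerate ones to the invertible such elements, and I would run through the four possibilities $Z_i\in\{\mathbb{R},\mathbb{C},\mathbb{R}\times\mathbb{R},\mathbb{C}\times\mathbb{C}\}$ from \cite{bert}. For $Z_i=\mathbb{R}$ only scalars occur, so the forms are the real multiples $\lambda B_i$. For $Z_i=\mathbb{C}$ the algebra $\mathfrak{k}_i$ is complex, $B_i=\operatorname{Re}(B_i^{\mathbb{C}})$ with $B_i^{\mathbb{C}}$ complex bilinear and symmetric, and the generator of $Z_i$ is multiplication by $i$, for which $B_i(i\cdot,\cdot)=-\operatorname{Im}(B_i^{\mathbb{C}})$ is again symmetric; hence all of $Z_i$ is self-adjoint and the invertible elements give exactly the complex multiples $\operatorname{Re}(zB_i^{\mathbb{C}})$, $z\in\mathbb{C}^*$. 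In the $\mathbb{R}\times\mathbb{R}$ and $\mathbb{C}\times\mathbb{C}$ cases $\mathfrak{k}_i/\mathfrak{h}_i=V_+\oplus V_-$ with $V_\pm$ being $B_i$-isotropic and dual to one another under $B_i$; a short count of invariant bilinear forms shows the symmetric ones fill out the (real, resp. complex) line spanned by $B_i$ while the anti-symmetric ones give the (para-)Kähler form, so again only the multiples of $B_i$ are symmetric and non-degenerate. This proves the first bullet.

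For the second bullet, the homomorphism $i:H\to CO(p,q)$ is, as a map to $Gl(\mathfrak{k}/\mathfrak{h})$, just the isotropy representation $Ad|_H$ read off in a frame, and its image lies in $CO(p,q)$ precisely when an invariant non-degenerate symmetric form of signature $(p,q)$ exists; by the first bullet every such form is $\sum_i c_iB_i$ with $c_i$ a non-zero real (resp. complex) scalar, which is exactly the stated condition. Two such homomorphisms into $CO(p,q)$ for the same $(p,q)$ are conjugate in $Gl(\mathfrak{k}/\mathfrak{h})$ since orthogonal groups of equal signature are conjugate, so $i$ is unique up to conjugacy. It then remains to compute the conjugacy classes of conformal structures of a fixed realizable signature, which by the reduction above is the set of invariant conformal classes of forms $\sum_ic_iB_i$ of signature $(p,q)$, taken modulo the residual equivalences (conjugation by $CO(p,q)$, i.e. overall rescaling, together with outer automorphisms). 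Here I would show that each real-Killing-form factor contributes no continuous parameter — its scalar $c_i$ is pinned down by these equivalences — while each complex-Killing-form factor contributes precisely the phase $e^{i\theta}$ of its complex scalar $c_i=z_i$: this phase cannot be removed because the invariant complex structure $J_i$ negates the real Killing form $B_i$, so it is at best an anti-isometry and the elements of $Z_i$ that also lie in $CO(p,q)$ change $z_i$ only by a real factor, never by a general phase. This gives the bijection with $(\mathbb{S}^1)^r$.

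The third bullet is then formal: a complex conformal structure is an extension of $(K,H,h)$ to a Cartan geometry of type $(PO(n+2,\mathbb{C}),P)$ with complex structure groups, and by Corollary \ref{comp} such an extension is holomorphic if and only if $(K,H)$ are complex Lie groups and $\alpha$ is complex linear, i.e. if and only if $(K,H,h)$ carries a complex structure, in which case complex conjugation produces a second, inequivalent extension; combining this with the first two bullets applied to the complex Killing form to build $i:H\to CO(n,\mathbb{C})$ yields the claim. The step I expect to be the main obstacle is the bookkeeping in the second bullet: precisely identifying which of the equivalences of Lemma \ref{equiv} and Proposition \ref{p4} act on the family of invariant conformal classes, and checking that after dividing out by them exactly the phases of the complex-Killing-form factors survive, so that the moduli is literally $(\mathbb{S}^1)^r$; the self-adjointness and form-counting in the $\mathbb{R}\times\mathbb{R}$ and $\mathbb{C}\times\mathbb{C}$ cases of the first bullet also need some care.
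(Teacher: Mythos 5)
Your overall route is the paper's: reduce via Theorem \ref{4.1.4} and Proposition \ref{p4} to invariant conformal classes, transport the question through the Killing-form bijection $X\mapsto B(X\cdot,\cdot)$ into the centralizer $Z_{Gl(\mathfrak{k}/\mathfrak{h})}(Ad(H))$, decompose over simple factors, quote \cite{bert} for the list $\mathbb{R},\mathbb{C},\mathbb{R}\times\mathbb{R},\mathbb{C}\times\mathbb{C}$, and invoke Corollary \ref{comp} for the third bullet. The paper disposes of all three bullets with ``clearly follows'', so your fleshing-out is welcome, but it contains one genuine error in the central case analysis.

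The error is the assertion that $Z_i=\mathbb{C}$ forces $\mathfrak{k}_i$ to be a complex Lie algebra. It does not: for a pseudo-K\"ahler (Hermitian-type) simple symmetric pair such as $(\mathfrak{su}(p,q),\mathfrak{s}(\mathfrak{u}(p)\oplus\mathfrak{u}(q)))$ the isotropy representation is irreducible of complex type, so its centralizer is $\mathbb{C}$, generated by the invariant complex structure $J=ad(Z)$ for $Z$ central in $\mathfrak{h}_i$ --- yet $\mathfrak{k}_i$ is a real form and $B$ is not complex linear. There $J$ is $B$-skew-adjoint (by invariance of $B$), hence a $B$-isometry, so $B(J\cdot,\cdot)$ is the antisymmetric K\"ahler form and the only invariant symmetric forms are the real multiples of $B$ --- exactly what the proposition asserts, but the opposite of what your argument yields, since you declare all of $Z_i$ self-adjoint and obtain complex multiples. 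The over-count propagates into the second bullet: such a factor would wrongly contribute an extra $\mathbb{S}^1$ to the moduli, whereas the statement's $r$ counts only factors with complex linear Killing form. The fix is to argue inside $Z_i$ purely by $B$-adjointness: $X\in Z_i$ gives a symmetric form iff $X$ is $B$-self-adjoint, and the generator $J$ of $Z_i\cong\mathbb{C}$ is self-adjoint precisely when it is an anti-isometry of $B$, which happens exactly when $B=\operatorname{Re}(B^{\mathbb{C}})$ for a complex bilinear Killing form (i.e.\ $\mathfrak{k}_i$ complex) and not in the pseudo-K\"ahler case; the same dichotomy must be run for $\mathbb{C}\times\mathbb{C}$. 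Your $\mathbb{R}\times\mathbb{R}$ computation and the treatment of the third bullet via Corollary \ref{comp} are fine.
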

\begin{proof}
The first claim clearly follows from description of elements of the centralizer of $i(H)$ in $Gl(\mathfrak{g}_{-1})$ contained in $Gl(\mathfrak{g}_{-1})/O(n-p,p)$. It follows from \cite{bert}, that only those in claim are possible. Second claim clearly follows, because we are taking $Gl(\mathfrak{g}_{-1})/CO(p,q)$ instead of $Gl(\mathfrak{g}_{-1})/O(p,q)$. The third claim then clearly follows from Lemma \ref{comp}. 
\end{proof}

\subsection{Quaternionic structures}\label{4.4}

The quaternionic structures correspond to the following grading of $\mathfrak{g}=\mathfrak{sl}(n+1,\mathbb{H})$:
$$
\left( \begin{array}{cc}
a & Z \\
X & A 
  \end{array} \right),
$$
where $A\in \mathfrak{gl}(n,\mathbb{H})$, $a\in \mathbb{H}$, $\operatorname{Re}(a)+\operatorname{Re}(tr(A))=0$, $X\in \mathbb{H}^{p+q}$ and $Z\in (\mathbb{H}^{p+q})^*$.

The corresponding effective model has $G=PGl(n+1,\mathbb{H})$ and
$$
g_0=\left( \begin{array}{cc}
-1 & 0 \\
0 & E
  \end{array} \right).
$$

We will assume that $n>1$, because $\mathfrak{sl}(2,\mathbb{H})\cong \mathfrak{so}(5,1)$ and the parabolic geometry is in fact conformal geometry, which we already discussed.

\begin{exam}
Quaternionic structure on $(\mathfrak{so}^*(2n+2), \mathfrak{so}^*(2)\oplus \mathfrak{so}^*(2n))$.

If we look in classification of simple symmetric spaces, $SO^*(2n)$ acts by a quaternionic representation i.e. there is $i: SO^*(2n)\to Gl(n,\mathbb{H})$. Further $SO^*(2)$ acts by multiples of $-k\in Sp(1)$ from left i.e. $SO^*(2)\times SO^*(2n)$ sits uniquely up to conjugation in $G_0:=P(Sp(1)\times Gl(n,\mathbb{H}))$. In fact, we immediately get a flat quaternionic structure on $SO^*(2n+2)/ SO^*(2)\times SO^*(2n)$ just by inclusion of $SO^*(2n+2)$ to $PGl(n+1,\mathbb{H})$.
\end{exam}

We show that there is a quaternionic structure on any pseudo--quater\-nionic--K\"ahler symmetric space and there are no quaternionic structures on other semisimple symmetric spaces except the previous example.

\begin{prop}
There is a quaternionic structure on any pseudo--quaternionic--K\"ahler locally symmetric space and there are no quaternionic structures on other semisimple locally symmetric spaces except $(\mathfrak{so}^*(2n+2), \mathfrak{so}^*(2)\oplus \mathfrak{so}^*(2n))$. The structure is unique up to equivalence.
\end{prop}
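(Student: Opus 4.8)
The plan is to translate the question into a statement about the isotropy representation via Theorem~\ref{4.1.4}, reduce to the case of simple $K$, and then analyse how the $\mathfrak{sp}(1)$-part of the structure sits inside the isotropy. For the quaternionic case $\mathfrak{g}_{-1}=\mathbb{H}^n$ and $G_0=P(Sp(1)\times Gl(n,\mathbb{H}))$, with $Sp(1)$ acting by right and $Gl(n,\mathbb{H})$ by left quaternionic multiplication. Writing $\mathfrak{m}=\mathfrak{k}/\mathfrak{h}$ for the $-1$ eigenspace of $Ad(h)$, Theorem~\ref{4.1.4} and Proposition~\ref{p4} say an invariant quaternionic structure exists exactly when some frame $\beta$ of $\mathfrak{m}$ has $i_\beta(Ad(H))\subseteq G_0$ (the requirement $i(h)=g_0$ being automatic, as $h$ and $g_0$ act by $-\operatorname{id}$); equivalently, $\operatorname{ad}(\mathfrak{h})$ preserves an almost quaternionic structure on $\mathfrak{m}$, i.e.\ a $3$-dimensional subspace $Q\subseteq\mathfrak{gl}(\mathfrak{m})$ spanned by anticommuting complex structures, and lands in $\mathfrak{sp}(1)\oplus\mathfrak{gl}(n,\mathbb{H})$ with $\mathfrak{sp}(1)$ the span of $Q$. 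The existence half then needs nothing further: a pseudo--quaternionic--K\"ahler symmetric space has isotropy in $Sp(1)\cdot Sp(p,q)\subseteq G_0$ by definition, and $(\mathfrak{so}^*(2n+2),\mathfrak{so}^*(2)\oplus\mathfrak{so}^*(2n))$ was treated in the Example.

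For the converse, decompose $(\mathfrak{k},\mathfrak{h})=\bigoplus_i(\mathfrak{k}_i,\mathfrak{h}_i)$ and $\mathfrak{m}=\bigoplus_i\mathfrak{m}_i$ into simple symmetric factors. As $Q$ is $\operatorname{ad}(\mathfrak{h})$-invariant while $\operatorname{ad}(\mathfrak{h}_i)$ acts only on $\mathfrak{m}_i$, iterated $\operatorname{ad}(\mathfrak{h}_i)$-brackets of a $J\in Q$ make the off-diagonal blocks of $Q$ vanish whenever $\dim\mathfrak{m}_i\geq 4$, so $Q$ then restricts to a quaternionic structure on $\mathfrak{m}_i$. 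Combining this with the fact that nonzero elements of $Q$ act invertibly and with the commutativity of the centralizer of $\operatorname{ad}(\mathfrak{h})$ in $\mathfrak{gl}(\mathfrak{m})$ (which by \cite{bert} is one of $\mathbb{R},\mathbb{C},\mathbb{R}\times\mathbb{R},\mathbb{C}\times\mathbb{C}$ for simple $\mathfrak{k}$) forces a single simple factor; since $n>1$ it has $\dim\mathfrak{m}\geq 8$. We may thus assume $\mathfrak{k}$ simple.

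Let $\phi\colon\mathfrak{h}\to\mathfrak{sp}(1)$ be $\operatorname{ad}$ followed by the projection onto the $\mathfrak{sp}(1)$-summand; its image is $0$, a line, or all of $\mathfrak{sp}(1)$.
\begin{itemize}
\item If $\phi=0$, then $Q$ lies in the centralizer of $\operatorname{ad}(\mathfrak{h})$, which is commutative and so cannot contain $Q$ --- impossible.
\item If $\phi$ is onto, then $\mathfrak{h}=\mathfrak{s}\oplus\mathfrak{h}'$ with $\mathfrak{s}\cong\mathfrak{sp}(1)$ mapped isomorphically onto the $\mathfrak{sp}(1)$-summand and $\mathfrak{h}'=\ker\phi$ acting $\mathbb{H}$-linearly. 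Since $\mathfrak{m}$ is an irreducible $\mathfrak{h}$-module it is $\mathfrak{s}$-isotypic; as the $\mathfrak{sp}(1)$-summand acts as $n$ copies of the standard representation and the full $\mathfrak{s}$-action differs from it by an $\mathbb{H}$-linear term, isotypicity forces that term to vanish, so $\mathfrak{s}$ acts exactly through $Q$. Then the Killing form of $\mathfrak{k}$ restricted to $\mathfrak{m}$ is $Q$-invariant, hence quaternion-Hermitian, so $\operatorname{ad}(\mathfrak{h}')\subseteq\mathfrak{sp}(p,q)$ and $(\mathfrak{k},\mathfrak{h})$ is pseudo--quaternionic--K\"ahler.
\item If the image of $\phi$ is a line, its centralizer in $\mathfrak{sp}(1)$ is that line, so some $J_0\in Q$ commutes with $\operatorname{ad}(\mathfrak{h})$; hence $(\mathfrak{k},\mathfrak{h})$ is (pseudo--)Hermitian symmetric and the centralizer is $\mathbb{C}=\mathbb{R}J_0\oplus\mathbb{R}$. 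Writing $\mathfrak{h}=\mathfrak{z}\oplus\mathfrak{h}''$ with one-dimensional centre $\mathfrak{z}$ (onto which $\phi$ restricts isomorphically) and semisimple $\mathfrak{h}''$, the latter acts on $Q$ fixing $\mathbb{R}J_0$ and hence, being semisimple, trivially on $Q$; so $(\mathfrak{m},J_0)$ is a complex $\mathfrak{h}''$-module with an invariant quaternionic structure. Running through Berger's list (\cite{berg}) of simple Hermitian symmetric pairs and keeping those with $\dim_{\mathbb{C}}\mathfrak{m}$ even (so $n>1$) and isotropy of $\mathfrak{h}''$ of quaternionic type leaves only $(\mathfrak{so}^*(2n+2),\mathfrak{so}^*(2)\oplus\mathfrak{so}^*(2n))$, realised by the defining quaternionic representation of $\mathfrak{so}^*(2n)\subseteq\mathfrak{gl}(n,\mathbb{H})$; it is not pseudo--quaternionic--K\"ahler, since $\mathfrak{so}^*(2n)$ preserves a skew-Hermitian rather than a Hermitian quaternionic form.
\end{itemize}

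Uniqueness follows from the theorem preceding Proposition~\ref{p4}: equivalence classes of invariant AHS--structures correspond to conjugacy classes of $i\colon H\to G_0$ together with classes in $Z_{Gl(\mathfrak{g}_{-1})}(i(H))/G_0$. In each case above $i(H)$ contains a subgroup acting $\mathbb{H}$-irreducibly on $\mathbb{H}^n$ --- namely $\mathfrak{so}^*(2n)$, respectively $\mathfrak{sp}(1)\cdot\mathfrak{h}'$ in the pseudo--quaternionic--K\"ahler case --- so by Schur its centralizer in $Gl(\mathfrak{g}_{-1})$ is $Gl(1,\mathbb{H})$ acting by right multiplication, which is the product of the $Sp(1)$-factor of $G_0$ with the positive scalars and hence contained in $G_0$; thus the second factor of the bijection is trivial, while the conjugacy class of $i$ is fixed by the rigidity of the isotropy representation. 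I expect the main obstacle to be the last itemised case --- running through all simple Hermitian symmetric pairs and eliminating every one but the $\mathfrak{so}^*$ family --- together with making the reduction to the simple case fully rigorous, in particular for the low-dimensional factors.
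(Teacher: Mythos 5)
Your proposal reaches the correct classification and, at the top level, runs on the same engine as the paper: reduce via Theorem~\ref{4.1.4} and Proposition~\ref{p4} to the question of when $\operatorname{ad}(\mathfrak{h})$ lands in $\mathfrak{g}_0=\mathfrak{sp}(1)\oplus\mathfrak{gl}(n,\mathbb{H})$ acting on $\mathbb{H}^n$ by right and left multiplication, and compare with Berger's list. But your case decomposition is genuinely different and finer. The paper splits according to whether the image of $i$ is contained in the $Gl(n,\mathbb{H})$-factor, in the $a\in\mathbb{H}$-factor, or meets both, dismissing the first two by inspection of the classification and handling the third by asserting irreducibility and ``going through the list.'' You instead stratify by the image of the projection $\phi:\mathfrak{h}\to\mathfrak{sp}(1)$ (zero, a line, onto), and this buys you two things the paper does not have: in the case $\phi=0$ you replace a list-check by the clean observation that $Q$ would have to sit inside the centralizer of $\operatorname{ad}(\mathfrak{h})$, which by \cite{bert} is commutative and so cannot contain a quaternion algebra; and in the case $\phi$ onto you derive the pseudo--quaternionic--K\"ahler conclusion intrinsically from $Q$-invariance of the restricted Killing form rather than by enumeration. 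Your ``line'' case, which isolates the Hermitian pairs and hence the $\mathfrak{so}^*$ family, is likewise a sharper organization of the paper's residual case. The uniqueness argument via Schur's lemma (centralizer $=Gl(1,\mathbb{H})\subset G_0$) matches the paper's one-line claim but actually computes the centralizer.

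Two steps deserve a warning. First, the reduction to a single simple factor via ``iterated brackets kill the off-diagonal blocks of $Q$'' is only sketched and is delicate for small factors, as you acknowledge; the paper sidesteps this by folding it into its (equally unproved) irreducibility claim. Second, and more substantively, in the case $\phi$ onto you assert that $\mathfrak{m}$ is an irreducible $\mathfrak{h}$-module in order to get $\mathfrak{s}$-isotypicity and hence $A|_{\mathfrak{s}}=0$. Irreducibility of the isotropy representation is \emph{false} in general for simple pseudo-Riemannian symmetric pairs (e.g.\ para-Hermitian ones have $\mathfrak{m}=\mathfrak{m}^+\oplus\mathfrak{m}^-$), so this needs an argument specific to the presence of $Q$ with $\phi$ onto --- for instance showing that any proper invariant summand forces the $\mathfrak{s}$-module $\mathbb{C}^{2n}\otimes\mathbb{C}^2$ to contain a trivial-type component incompatible with faithfulness and $\mathfrak{h}=[\mathfrak{m},\mathfrak{m}]$. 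The paper makes the mirror-image unjustified claim (``intersection with both parts implies irreducibility''), so you are not worse off than the source, but if you want your version to be a genuine improvement this is the step to nail down.
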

\begin{proof}
Let $(K,H,h)$ be a semisimple homogeneous symmetric space and assume that the image of $i$ is contained in $Gl(n,\mathbb{H})$. Then the representation of $Ad(i(H))$ is of quternionic type and there is no such in the classification of semisimple symmetric spaces. The same is true in the case that the image of $i$ is contained in the part given by $a\in \mathbb{H}$. So the image of $i$ has intersection with both parts, but this implies that the representation of $Ad(i(H))$ is irreducible and going through the list of simple symmetric spaces we check that the only possibilities are pseudo-quaternionic-K\"ahler symmetric spaces (where $Sp(1)\times Sp(p,q)$ trivially sits in $G_0$) and the previous example. Since $i(H)$ acts by irreducible representations, its image is unique, and the centralizer of $i(H)$ in $Gl(\mathfrak{g}_{-1})$ contained $Gl(\mathfrak{g}_{-1})/G_0$ is trivial.
\end{proof}

\subsection{Para-quaternionic structures}\label{4.5}

The para-quaternionic structures correspond to the following grading of $\mathfrak{g}=\mathfrak{sl}(n+2,\mathbb{R})$:
$$
\left( \begin{array}{cc}
a & Z \\
X & A 
  \end{array} \right),
$$
where $A\in \mathfrak{gl}(n,\mathbb{R})$, $a\in \mathfrak{gl}(2,\mathbb{R})$, $tr(a)+tr(A)=0$, $X\in \mathbb{R}^{n}\otimes (\mathbb{R}^{2})^*$ and $Z\in \mathbb{R}^{2}\otimes (\mathbb{R}^{n})^*$.

The corresponding effective model has $G=PGl(n+2,\mathbb{R})$ and
$$
g_0=\left( \begin{array}{ccc}
-1 & 0& 0 \\
0 & -1& 0 \\
0 & 0 & E
  \end{array} \right).
$$

We will assume that $n>2$, because $\mathfrak{sl}(4,\mathbb{R})\cong \mathfrak{so}(3,3)$ and the parabolic geometry is in fact conformal geometry, which we already discussed.

\begin{exam}
Para-quaternionic structures on $(\mathfrak{so}(k+1,l+1),\mathfrak{so}(1,1)\oplus \mathfrak{so}(k,l))$ and  $(\mathfrak{so}(k+2,l),\mathfrak{so}(2)\oplus \mathfrak{so}(k,l))$.

First we notice that $SO(n+2)/SO(2)\times SO(n)$ is equivalent to the homogeneous model of $(G,P)$. Thus $SO(2)\times SO(n)$ sits uniquely up to conjugation in $G_0:=P(Gl(2,\mathbb{R})\times Gl(n,\mathbb{R}))$. Since it does not matter, which signature the matrices have, we immediately get a flat para-quaternionic structure on $SO(k+1,l+1)/SO(1,1)\times SO(k,l)$ and  $SO(k,l+2)/SO(2)\times SO(k,l)$ just by inclusion.
\end{exam}

We show that there is a para-quaternionic structure on any pseudo-para-quaternionic-K\"ahler symmetric space and there are no para-quaternionic structures on other  semisimple symmetric spaces except the previous examples.

\begin{prop}
There is a para-quaternionic structure on any pseudo-para-quaternionic-K\"ahler locally symmetric space and there are no para-quaternionic structures on other semisimple locally symmetric spaces except $(\mathfrak{so}(k+1,l+1),\mathfrak{so}(1,1)\oplus \mathfrak{so}(k,l))$ and  $(\mathfrak{so}(k+2,l),\mathfrak{so}(2)\oplus \mathfrak{so}(k,l))$. The structure is unique up to equivalence.
\end{prop}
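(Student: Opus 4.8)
The plan is to argue along the lines of the quaternionic case, reducing the statement to a description of the ways $H$ can sit inside $G_0=P(Gl(2,\mathbb{R})\times Gl(n,\mathbb{R}))$. By Theorem~\ref{4.1.4} and Proposition~\ref{p4} it suffices to classify the inclusions $i: H\hookrightarrow G_0$ for which the induced representation of $H$ on $\mathfrak{g}_{-1}=\mathbb{R}^n\otimes(\mathbb{R}^2)^*$ is isomorphic to the linear isotropy representation of $H$ on $\mathfrak{k}/\mathfrak{h}$. First I would rule out the two degenerate positions of $i(H)$. If $i(H)$ acts on the factor $\mathbb{R}^2$ only by scalars, then $\mathfrak{k}/\mathfrak{h}$ is forced to be two isomorphic copies of an $H$-module; if instead $i(H)$ lies in the part given by $a\in\mathfrak{gl}(2,\mathbb{R})$, i.e.\ acts on $\mathbb{R}^n$ only by scalars, then $\mathfrak{k}/\mathfrak{h}$ is forced to be $n$ copies of a two-dimensional $H$-module. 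Neither can be the isotropy representation of a semisimple symmetric space, since that representation never contains an isomorphic repeated summand: each of its simple factors is either irreducible or a sum of two non-isomorphic irreducibles. Hence $i(H)$ projects non-trivially onto both factors of $G_0$, and $\mathfrak{k}/\mathfrak{h}\cong U\otimes V$ with $U$ a non-trivial two-dimensional and $V$ a non-trivial $n$-dimensional $H$-module.

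The heart of the proof, and the step I expect to be the main obstacle, is to run through the classification of simple symmetric Lie algebras in \cite{berg} (using the reduction to simple factors to discard genuine products, whose isotropy representation is a direct sum and not such a tensor product) and single out exactly the pairs with isotropy representation of the form $U\otimes V$, $\dim U=2$. I expect the answer to be precisely the pseudo-para-quaternionic-K\"ahler symmetric spaces, where $U$ carries the standard representation of $Sp(2,\mathbb{R})$ and the isotropy group sits canonically in $G_0$ (for the split model, $Sp(2,\mathbb{R})\times Sp(2n,\mathbb{R})\subset G_0$), together with the two families $(\mathfrak{so}(k+1,l+1),\mathfrak{so}(1,1)\oplus\mathfrak{so}(k,l))$ and $(\mathfrak{so}(k+2,l),\mathfrak{so}(2)\oplus\mathfrak{so}(k,l))$ of the example, where $U$ carries only the standard representation of $SO(1,1)$ respectively $SO(2)$, too small to come from a para-quaternionic-K\"ahler structure but still admitting the required inclusion into $G_0$. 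Structurally, the two-dimensional tensor factor forces inside $\mathfrak{h}$ either an $\mathfrak{sl}(2,\mathbb{R})$ acting by its standard representation, which pins $\mathfrak{k}$ down to the para-quaternionic-K\"ahler list, or a one-dimensional (split or compact) torus whose centralizer in the simple algebra $\mathfrak{k}$ has the prescribed orthogonal shape; carrying this case analysis out against the table is the only place genuine computation is unavoidable, and the $\mathfrak{so}$-families play exactly the role that $(\mathfrak{so}^*(2n+2),\mathfrak{so}^*(2)\oplus\mathfrak{so}^*(2n))$ does in the quaternionic case.

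Finally, uniqueness would follow as in the quaternionic case: in each surviving case $U$, hence the conjugacy class of $i(H)$ in $G_0$, is determined, and the centralizer of $i(H)$ in $Gl(\mathfrak{g}_{-1})$ consists only of scalars on the isotypic blocks, which already lie in $G_0$ (in the $SO(1,1)$-case this includes rescaling the two isotypic lines independently, a diagonal element of the $Gl(2,\mathbb{R})$-block); thus $Z_{Gl(\mathfrak{g}_{-1})}(i(H))$ inside $Gl(\mathfrak{g}_{-1})/G_0$ is trivial, and by Proposition~\ref{p4} the structure is unique up to equivalence. Existence needs no extra work: by construction the symmetric spaces singled out above are exactly those admitting the inclusion $i: H\hookrightarrow G_0$, so each carries the corresponding invariant para-quaternionic structure by Theorem~\ref{4.1.4}.
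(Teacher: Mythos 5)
Your proposal is correct and follows essentially the same route as the paper: rule out the two degenerate positions of $i(H)$ (image in the $Sl(n,\mathbb{R})$ part or in the $\mathfrak{gl}(2,\mathbb{R})$ part) because the resulting isotypic multiplicities cannot occur for a semisimple symmetric Lie algebra, then run through Berger's list to isolate the pseudo-para-quaternionic-K\"ahler spaces and the two $\mathfrak{so}$-families, and finally get uniqueness from the triviality of $Z_{Gl(\mathfrak{g}_{-1})}(i(H))$ modulo $G_0$. In fact you are slightly more careful than the paper's literal wording in the $SO(1,1)$ and $SO(2)$ cases, where the isotropy representation is not irreducible and the centralizer is larger but still absorbed by the $Gl(2,\mathbb{R})$-block of $G_0$.
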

\begin{proof}
Let $(K,H,h)$ be a semisimple homogeneous symmetric space and assume that the image of $i$ is contained in $Sl(n,\mathbb{R})$. Then the representation of $Ad(i(H))$ decomposes to two copies of standard representation of $\mathfrak{sl}(n,\mathbb{R})$ and there is no such in the classification of semisimple symmetric spaces. The same is true in the case that the image of $i$ is contained in the part given by $a\in \mathfrak{gl}(2,\mathbb{R})$. So the image of $i$ has intersection with both parts, but this implies that the representation of $Ad(i(H))$ is irreducible and going through the list of simple symmetric spaces we check that the only possibilities are pseudo-para-quaternionic-K\"ahler symmetric spaces (where $Sp(2,\mathbb{R})\times Sp(2n,\mathbb{R})$ trivially sits in $P(Gl(2,\mathbb{R})\times Gl(2n,\mathbb{R}))$) and those in previous example.  Since $i(H)$ acts by irreducible representations, its image is unique, and the centralizer of $i(H)$ in $Gl(\mathfrak{g}_{-1})$ contained $Gl(\mathfrak{g}_{-1})/G_0$ is trivial.
\end{proof}

\section{Flat invariant AHS--structures}

Many of these symmetric spaces and structures on them are described in \cite{bert} in language of algebraic geometry. We present complete list of such structures.

\subsection{General Grassmannian structures}

The Grassmannian structures of type $(p,q)$ correspond to the following grading of $\mathfrak{g}=\mathfrak{sl}(p+q,\mathbb{K})$, where $p>2, q>2$ and $\mathbb{K}=\mathbb{R}$, or $p>1, q>1$ and $\mathbb{K}=\mathbb{C}$, or $p>1, q>1$ and $\mathbb{K}=\mathbb{H}$:
$$
\left( \begin{array}{cc}
A & Z \\
X & B 
  \end{array} \right),
$$
where $A\in \mathfrak{gl}(p,\mathbb{K})$, $B\in \mathfrak{gl}(q,\mathbb{K})$, $\operatorname{Re}(tr(a))+\operatorname{Re}(tr(A))=0$, $X\in \mathbb{K}^{q}\otimes (\mathbb{K}^{p})^*$ and $Z\in \mathbb{K}^{q}\otimes (\mathbb{K}^{p})^*$.

The corresponding effective model has $G=PGl(p+q,\mathbb{K})$ and
$$
g_0=\left( \begin{array}{cc}
-E_p & 0 \\
0 & E_q
  \end{array} \right),
$$
where $E_p$, $E_q$ are identity matrices of sizes $p\times p$ and $q\times q$.

We show that general Grassmannian structures are only on simple symmetric spaces and list them.

\begin{prop}
The only locally symmetric spaces admitting a (unique) general Grassmannian structure are the following types of simple symmetric spaces:

 $(\mathfrak{so}(a+b,c+d),\mathfrak{so}(a,c)\oplus \mathfrak{so}(b,d))$, $\mathbb{K}=\mathbb{R}$ of type $(a+c,b+d)$; 

$(\mathfrak{sp}(2(p+q),\mathbb{R}),\mathfrak{sp}(2p,\mathbb{R})\oplus \mathfrak{sp}(2q,\mathbb{R}))$, $\mathbb{K}=\mathbb{R}$ of type $(2p,2q)$;

$(\mathfrak{so}(p+q,\mathbb{C}),\mathfrak{so}(p,\mathbb{C})\oplus \mathfrak{so}(q,\mathbb{C}))$, $\mathbb{K}=\mathbb{C}$ of type $(p,q)$, holomorphic Cartan geometry;

$(\mathfrak{sp}(p+q,\mathbb{C}),\mathfrak{sp}(p,\mathbb{C})\oplus \mathfrak{sp}(q,\mathbb{C}))$, $\mathbb{K}=\mathbb{C}$ of type $(p,q)$, holomorphic Cartan geometry;

$(\mathfrak{so}^*(2(p+q)),\mathfrak{so}^*(2p)\oplus \mathfrak{so}^*(2q))$, $\mathbb{K}=\mathbb{H}$ of type $(2p,2q)$;

$(\mathfrak{sp}(a+b,c+d),\mathfrak{sp}(a,c)\oplus \mathfrak{sp}(b,d))$, $\mathbb{K}=\mathbb{H}$ of type $(a+c,b+d)$;

$(\mathfrak{su}(a+b,c+d),\mathfrak{su}(a,c)\oplus \mathfrak{su}(b,d)) \oplus \mathfrak{so}(2)$, $\mathbb{K}=\mathbb{C}$ of type $(a+c,b+d)$.
\end{prop}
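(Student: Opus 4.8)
The plan is to use Proposition~\ref{p4} to reduce the existence and uniqueness questions to a statement about the isotropy representation $\mathfrak{k}/\mathfrak{h}$, and then to match this against the classification of simple symmetric Lie algebras in \cite{berg}.

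First I would record the translation. Since we are not in the projective case, the $b_2$-part of an extension is irrelevant for equivalence, so by Proposition~\ref{p4} an invariant general Grassmannian structure of type $(p,q)$ over $\mathbb{K}$ on $(K,H,h)$ exists exactly when there is a frame $\beta$ of the $-1$ eigenspace $\mathfrak{k}/\mathfrak{h}$ of $Ad(h)$ with $i_\beta(Ad(H))\subset G_0$. Here $Ad(g_0)$ acts by $-\mathrm{id}$ on $\mathfrak{g}_{-1}=\mathrm{Hom}_\mathbb{K}(\mathbb{K}^p,\mathbb{K}^q)$, matching $Ad(h)=-\mathrm{id}$ on $\mathfrak{k}/\mathfrak{h}$, and $Ad(G_0)$ acts on $\mathfrak{g}_{-1}$ as the image of $Gl(p,\mathbb{K})\times Gl(q,\mathbb{K})$. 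So the condition is equivalent to: \emph{$\mathfrak{k}/\mathfrak{h}$, as a real $H$-module, admits the structure $\mathrm{Hom}_\mathbb{K}(W,V)$ for $\mathbb{K}$-modules $W,V$ of $\mathbb{K}$-rank $p$ and $q$ on which $H$ acts $\mathbb{K}$-linearly}, with $(p,q)$ in the admissible range. Equivalence classes are counted by the centralizer of $i_\beta(H)$ in $Gl_\mathbb{R}(\mathfrak{g}_{-1})$ modulo $G_0$ and outer automorphisms induced from $K$, so uniqueness follows once this quotient is trivial.

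Next, the construction direction. For each of the seven pairs, $\mathfrak{h}$ is a sum of at most two non-abelian ideals $\mathfrak{h}_1,\mathfrak{h}_2$ together with at most a one-dimensional centre, and $\mathfrak{k}/\mathfrak{h}$ is, as an $H$-module, the external tensor product over $\mathbb{K}$ of the standard vector representations $W$ of $\mathfrak{h}_1$ and $V$ of $\mathfrak{h}_2$ (the centre acting by $\mathbb{K}$-scalars); concretely these are the pseudo-Riemannian, complex and quaternionic Grassmannians, realised in the homogeneous model $PGl(p+q,\mathbb{K})/P$ in the spirit of the examples of Sections~\ref{4.4} and \ref{4.5}. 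This produces the required frame, so the structure exists and, being the restriction of the flat structure on the Grassmannian model to the open $K$-orbit, it is flat; in the two complex families the Cartan geometry is holomorphic by Corollary~\ref{comp}. For uniqueness, $W$ and $V$ are $\mathbb{K}$-irreducible $H$-modules (the relevant vector representations of classical ideals are), so Schur's lemma identifies the centralizer of $i_\beta(H)$ in $Gl_\mathbb{R}(\mathfrak{g}_{-1})$ with $\mathrm{End}_H(W)\otimes\mathrm{End}_H(V)$, and each tensor factor lies in $Gl(p,\mathbb{K})$, resp.\ $Gl(q,\mathbb{K})$, hence inside $G_0$; together with the fact that transition maps between frames then act trivially modulo $G_0$ and outer automorphisms, this gives uniqueness.

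For the classification direction, suppose $(K,H,h)$ admits such a structure, so $\mathfrak{k}/\mathfrak{h}\cong\mathrm{Hom}_\mathbb{K}(W,V)$. Decompose $W=\bigoplus_j W_j$, $V=\bigoplus_l V_l$ into $\mathbb{K}$-irreducible $H$-submodules, so $\mathfrak{k}/\mathfrak{h}=\bigoplus_{j,l}\mathrm{Hom}_\mathbb{K}(W_j,V_l)$. I would first note two facts: every nontrivial irreducible $H$-submodule of $\mathfrak{k}/\mathfrak{h}=\bigoplus_i\mathfrak{k}_i/\mathfrak{h}_i$ is supported on exactly one simple factor $H_i$ (an irreducible submodule of a direct sum of modules over disjoint factors lies in a single summand), and the isotropy representation of a semisimple symmetric space is multiplicity free. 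If some $W_j$ and $V_l$ were nontrivial on distinct simple factors $H_a\ne H_b$ then $\mathrm{Hom}_\mathbb{K}(W_j,V_l)$ would be nontrivial on both, contradicting the first fact; if $W$ or $V$ were $H$-trivial then $\mathfrak{k}/\mathfrak{h}$ would contain a $\mathbb{K}$-module with multiplicity $\ge 2$ (using $p,q\ge 2$), contradicting the second. Hence $(K,H,h)$ has a single simple symmetric factor, inside which $\mathfrak{k}/\mathfrak{h}$ is an external tensor product over $\mathbb{K}$ of two nontrivial vector representations of (possibly non-simple, possibly centrally extended) ideals of $\mathfrak{h}$. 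Running through the table of \cite{berg} and keeping exactly those simple symmetric Lie algebras whose complement is a single block $\mathrm{Hom}_\mathbb{K}(\mathbb{K}^q,\mathbb{K}^p)$ with $p,q\ge 2$ — which forces the form pattern $\mathfrak{so}/\mathfrak{so}\oplus\mathfrak{so}$, $\mathfrak{sp}/\mathfrak{sp}\oplus\mathfrak{sp}$ or the anti-Hermitian pattern $\mathfrak{su}/\mathfrak{s}(\mathfrak{u}\oplus\mathfrak{u})$ in their real, complex and quaternionic forms, excludes the $\mathfrak{sl}(n,\mathbb{K})/\mathfrak{s}(\mathfrak{gl}\oplus\mathfrak{gl})$ cases whose complement $V\oplus V^*$ is a doubled block, and leaves the rank-$2$ cases over $\mathbb{R}$ to the (para-)quaternionic types already treated — yields precisely the seven families.

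The main obstacle is the last step, the explicit pass through \cite{berg}. The delicate points are: (i) the reducible-isotropy cases — the complex simple symmetric spaces and the $(\mathfrak{h}\oplus\mathfrak{h},\mathfrak{h})$ type — where one must check that the only way the complement can be a single $\mathrm{Hom}_\mathbb{K}$ with both ranks $\ge 2$ is as the complexification of the real $\mathfrak{so}$- or $\mathfrak{sp}$-Grassmannians (the complexification of the $\mathfrak{su}$-Grassmannian instead giving the excluded $V\oplus V^*$, and an adjoint representation of a simple ideal never being a full $\mathrm{Hom}$); (ii) the careful bookkeeping of left/right $\mathbb{H}$-module conventions and of the admissible ranges in the $\mathbb{K}=\mathbb{H}$ families; and (iii) the multiplicity-freeness input, which for products of isomorphic factors requires observing that the isomorphic summands are supported on distinct ideals and are therefore non-isomorphic as $H$-modules.
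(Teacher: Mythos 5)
Your proposal is correct and follows essentially the same route as the paper: rule out the case where $i(H)$ acts through only one tensor factor (since the isotropy representation would then be a multiple of a standard representation, which does not occur in Berger's classification), conclude that the symmetric space must be simple, and then scan the list of simple symmetric Lie algebras for those whose complement is a single $\mathrm{Hom}_{\mathbb{K}}(\mathbb{K}^p,\mathbb{K}^q)$ block. You spell out the multiplicity-freeness and Schur-lemma details for simplicity and uniqueness more explicitly than the paper does, but the underlying argument is the same.
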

\begin{proof}
Let $(K,H,h)$ be a semisimple homogeneous symmetric space and assume that the image of $i$ is contained in $Sl(p,\mathbb{K})$ or $Sl(q,\mathbb{K})$. Then the representation of $Ad(i(H))$ decomposes to several copies of standard representation of $\mathfrak{sl}(p,\mathbb{R})$ and there is no such in the classification of semisimple symmetric spaces. So the image of $i$ has intersection with both parts, but this implies that the representation of $Ad(i(H))$ is irreducible and going through the list of simple symmetric spaces we see that the only those in the proposition act by the prescribed representations of $G_0$. 
\end{proof}

We note that there the following types of symmetric spaces with Grassmannian structures, which are not semisimple. The inclusion to $G_0$ is given by the adjoint representation.

$\mathbb{R}\oplus \mathfrak{sl}(n,\mathbb{R})$, $\mathbb{K}=\mathbb{R}$ of type $(n,n)$;

$\mathbb{C}\oplus \mathfrak{sl}(n,\mathbb{C})$, $\mathbb{K}=\mathbb{C}$ of type $(n,n)$, holomorphic Cartan geometry;

$\mathbb{R}\oplus \mathfrak{sl}(n,\mathbb{H})$, $\mathbb{K}=\mathbb{H}$ of type $(n,n)$.

\subsection{General Lagrangean structures}

The Lagrangean structures correspond to the following grading of $\mathfrak{g}=\mathfrak{sp}(2n,\mathbb{K})$, where $\mathbb{K}=\mathbb{R}$, or $\mathbb{K}=\mathbb{C}$, or $\mathfrak{g}=\mathfrak{sp}(n,n)$ in the case $\mathbb{K}=\mathbb{H}$:
$$
\left( \begin{array}{cc}
-A^T & Z \\
X &  A
  \end{array} \right),
$$
where $A\in \mathfrak{gl}(n,\mathbb{K})$, , $X\in S^2 \mathbb{K}^{n}$ and $Z\in (S^2 \mathbb{K}^{n})^*$.

The corresponding effective model has $G=Sp(2n,\mathbb{K})\rtimes \mathbb{Z}_2$ or $Sp(n,n)\rtimes \mathbb{Z}_2$, because
$$
g_0=\left( \begin{array}{cc}
-E_n & 0 \\
0 & E_n
  \end{array} \right),
$$
where $E_n$ is $n\times n$ identity matrix, is not in the connected component of identity of $G$.

We show that general Lagrangean structures are only on simple symmetric spaces and list them.

\begin{prop}
The only locally symmetric spaces admitting a (unique) general Lagrangean structure are the following types of simple symmetric spaces:

$(\mathfrak{sp}(2n,\mathbb{C}),\mathfrak{sp}(2n,\mathbb{R}))$, $\mathbb{K}=\mathbb{R}$ ;

$\mathfrak{sp}(2n,\mathbb{R})$, $\mathbb{K}=\mathbb{R}$;

$\mathfrak{sp}(n,\mathbb{C})$, $\mathbb{K}=\mathbb{C}$, holomorphic Cartan geometry;

$(\mathfrak{sp}(n,\mathbb{C}),\mathfrak{sp}(p,q))$, $\mathbb{K}=\mathbb{H}$;

$\mathfrak{sp}(p,q)$, $\mathbb{K}=\mathbb{H}$;

$(\mathfrak{sp}(2n,\mathbb{R}),\mathfrak{su}(p, q) \oplus \mathfrak{so}(2))$, $\mathbb{K}=\mathbb{C}$;

$(\mathfrak{sp}(p,q),\mathfrak{su}(p, q) \oplus \mathfrak{so}(2))$, $\mathbb{K}=\mathbb{C}$.
\end{prop}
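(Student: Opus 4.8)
The plan is to follow the same route as the proofs above for the Grassmannian and (para-)quaternionic structures. By Theorem \ref{4.1.4} and Proposition \ref{p4}, an invariant general Lagrangean structure on a semisimple symmetric space $(K,H,h)$ is the same thing as a frame $\beta$ of the $-1$ eigenspace $\mathfrak{k}/\mathfrak{h}$ of $Ad(h)$ for which the induced homomorphism $i_\beta$ carries $H$ into $G_0$; since the connected component of $G_0$ is $Gl(n,\mathbb{K})$ acting on $\mathfrak{g}_{-1}=S^2\mathbb{K}^n$ by the symmetric square of the standard representation (and the extra $\mathbb{Z}_2$ of $G_0$ acts on $\mathfrak{g}_{-1}$ by $-1$), this means precisely that the isotropy representation of $H$ on $\mathfrak{k}/\mathfrak{h}$ is $\mathbb{K}$-isomorphic to $S^2_\mathbb{K}V$ for some faithful $\mathbb{K}$-representation $V=\mathbb{K}^n$ of $H$. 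As the $b_2$-part plays no role in equivalence outside the (H--)projective case, once such a $V$ exists the structure is unique up to equivalence as soon as the centraliser of $i_\beta(H)$ in $Gl(\mathfrak{g}_{-1})$ contributes nothing modulo $G_0$.

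First I would show that $V$ must be $\mathbb{K}$-irreducible. If not, complete reducibility of the isotropy representation of a semisimple symmetric space gives $V=W\oplus W'$ with both summands nonzero $H$-submodules over $\mathbb{K}$, hence $\mathfrak{k}/\mathfrak{h}\cong S^2_\mathbb{K}W\oplus(W\otimes_\mathbb{K}W')\oplus S^2_\mathbb{K}W'$, an $H$-invariant splitting with no trivial summand; a pass through the tables of \cite{berg} then rules this out — it is not the isotropy of any decomposable symmetric space (the $Gl(\dim W)$- and $Gl(\dim W')$-parts of $i_\beta(H)$ would each have to act trivially on one summand yet nontrivially on the mixed term $W\otimes_\mathbb{K}W'$), nor of any simple one (simple symmetric spaces with reducible isotropy have it of one of the shapes $\mathrm{Hom}_\mathbb{K}(W,W')\oplus\mathrm{Hom}_\mathbb{K}(W',W)$, $S^2_\mathbb{K}W\oplus S^2_\mathbb{K}W^*$ or $\Lambda^2_\mathbb{K}W\oplus\Lambda^2_\mathbb{K}W^*$, none of the required form). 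So $i_\beta(H)$ acts $\mathbb{K}$-irreducibly on $V$, whence $\mathfrak{g}_{-1}=S^2_\mathbb{K}V$ and therefore $\mathfrak{k}/\mathfrak{h}$ is an irreducible real $H$-module, so $(\mathfrak{k},\mathfrak{h})$ is either simple or of the form $(\mathfrak{h}\oplus\mathfrak{h},\mathfrak{h})$ with $\mathfrak{h}$ simple.

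It then remains to run the second table check. For $(\mathfrak{h}\oplus\mathfrak{h},\mathfrak{h})$ the isotropy representation is the adjoint of $\mathfrak{h}$, and the adjoint of a real simple Lie algebra is a $\mathbb{K}$-symmetric square of a faithful $\mathbb{K}$-representation exactly when $\mathfrak{h}$ is of type $C$, namely $\mathfrak{sp}(2n,\mathbb{R})$ with $\mathbb{K}=\mathbb{R}$, $\mathfrak{sp}(n,\mathbb{C})$ with $\mathbb{K}=\mathbb{C}$, or $\mathfrak{sp}(p,q)$ with $\mathbb{K}=\mathbb{H}$, the complex one being holomorphic by Corollary \ref{comp}. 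For simple $\mathfrak{k}$, going through the table of \cite{berg} and keeping those whose irreducible isotropy is a $\mathbb{K}$-symmetric square of a faithful representation leaves precisely $(\mathfrak{sp}(2n,\mathbb{C}),\mathfrak{sp}(2n,\mathbb{R}))$ for $\mathbb{K}=\mathbb{R}$ (isotropy $\cong i\,\mathfrak{sp}(2n,\mathbb{R})\cong S^2_\mathbb{R}\mathbb{R}^{2n}$), $(\mathfrak{sp}(n,\mathbb{C}),\mathfrak{sp}(p,q))$ for $\mathbb{K}=\mathbb{H}$, and $(\mathfrak{sp}(2n,\mathbb{R}),\mathfrak{su}(p,q)\oplus\mathfrak{so}(2))$, $(\mathfrak{sp}(p,q),\mathfrak{su}(p,q)\oplus\mathfrak{so}(2))$ for $\mathbb{K}=\mathbb{C}$; for the last two, complexifying $\mathfrak{k}$ gives, under $\mathfrak{gl}(n,\mathbb{C})$, the decomposition $\mathfrak{gl}(n,\mathbb{C})\oplus S^2\mathbb{C}^n\oplus S^2(\mathbb{C}^n)^*$, whose non-Levi part realifies to $S^2_\mathbb{C}\mathbb{C}^n$.

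Uniqueness then follows from Schur's lemma: the centraliser of the irreducible $i_\beta(H)$-module $\mathfrak{g}_{-1}$ is a division algebra, and in each case it is already realised inside $G_0$ — the scalars $\mathbb{K}^\times$ of $Gl(n,\mathbb{K})$ act on $S^2$ by squares, and the $\mathbb{Z}_2$ of $G_0$ supplies the element acting by $-1$ — so the class in $Gl(\mathfrak{g}_{-1})/G_0$ is trivial. The main obstacle is precisely these two checks against Berger's tables — excluding every reducible isotropy of the shape $S^2_\mathbb{K}W\oplus(W\otimes_\mathbb{K}W')\oplus S^2_\mathbb{K}W'$, and confirming that exactly these seven spaces have isotropy isomorphic to a $\mathbb{K}$-symmetric square of a faithful representation — together with keeping the reality and quaternionic bookkeeping (the meaning of $S^2_\mathbb{H}\mathbb{H}^n$ and the realification of $S^2_\mathbb{C}\mathbb{C}^n$) consistent with the choice of $\mathbb{K}$.
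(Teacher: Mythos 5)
Your overall route is the same as the paper's: translate the existence of an invariant Lagrangean structure into the condition that the isotropy representation of $H$ on $\mathfrak{k}/\mathfrak{h}$ be $\mathbb{K}$-isomorphic to $S^2_{\mathbb{K}}V$ for a faithful $\mathbb{K}$-representation $V$, reduce to simple symmetric Lie algebras, and then run through Berger's tables; the treatment of the reducible-$V$ case via the cross term $W\otimes_{\mathbb{K}}W'$ is exactly the paper's (terser) argument, and your Schur-type uniqueness discussion supplies detail the paper only asserts.

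There is, however, one step that is wrong as stated: from ``$V$ is $\mathbb{K}$-irreducible'' you conclude ``$\mathfrak{g}_{-1}=S^2_{\mathbb{K}}V$ and therefore $\mathfrak{k}/\mathfrak{h}$ is an irreducible real $H$-module.'' The symmetric square of an irreducible module need not be irreducible -- for $H$ acting through $O(p,q)\subset Gl(n,\mathbb{R})$ on the irreducible $V=\mathbb{R}^n$, $S^2V$ splits into the trace and trace-free parts, and this is precisely the mechanism behind the non-semisimple examples $(\mathbb{R}\oplus\mathfrak{su}(p,q),\mathfrak{so}(p,q))$ etc.\ that the paper lists right after the proposition. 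Since you use irreducibility of $\mathfrak{k}/\mathfrak{h}$ to deduce simplicity of $(\mathfrak{k},\mathfrak{h})$, the deduction as written is unsound, and it would also wrongly exclude from consideration any simple symmetric Lie algebra with reducible isotropy of the form $S^2_{\mathbb{K}}V$. The fix is to argue simplicity directly, as the paper does: if $(\mathfrak{k},\mathfrak{h})$ were a nontrivial sum of simple symmetric Lie algebras, each factor $H_i$ would act nontrivially only on its own summand of $\mathfrak{k}/\mathfrak{h}$; but an irreducible $V$ under $H_1\times H_2$ is an (outer) tensor product $V_1\otimes V_2$ with both $\dim V_i>1$ (faithfulness), and then $S^2V=(S^2V_1\otimes S^2V_2)\oplus(\Lambda^2V_1\otimes\Lambda^2V_2)$ has no summand on which one factor acts nontrivially and the other trivially -- a contradiction. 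With that replacement, and keeping your table checks, the argument goes through and matches the paper's conclusion.
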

\begin{proof}
Let $(K,H,h)$ be a semisimple homogeneous symmetric space and assume $i(H)\subset G_0=Gl(n,\mathbb{K})$. Since the representation of $i(H)$ is completely reducible, there is invariant complement to any invariant subspace simple factor of $i(H)$ in $S^2 \mathbb{K}^{n}$. Since due to structure of $S^2 \mathbb{K}^{n}$ there is at most one invariant subspace with non-trivial action of $i(H)$, the symmetric space has to be simple.  Then going through the list of simple symmetric spaces we see that the only those in the proposition act by the prescribed representations of $G_0$ except last two examples. The last two examples corresponds to the maximal compact subgroup of $\mathfrak{sp}(n,\mathbb{C})$ and the signature and the real form of $\mathfrak{k}$ plays no role.
\end{proof}

Of course, if we add abelian factors (which are mapped to diagonal in $S^2 \mathbb{K}^{n}$)  to certain types of simple symmetric spaces we obtain the following symmetric spaces with general Lagrangean structures:

$(\mathbb{R}\oplus \mathfrak{su}(p,q),\mathfrak{so}(p,q))$, $\mathbb{K}=\mathbb{R}$;

$(\mathbb{R}\oplus \mathfrak{sl}(p+q,\mathbb{R}),\mathfrak{so}(p,q))$, $\mathbb{K}=\mathbb{R}$;

$(\mathbb{C}\oplus \mathfrak{sl}(n,\mathbb{C}),\mathfrak{so}(n,\mathbb{C}))$, $\mathbb{K}=\mathbb{C}$, holomorphic Cartan geometry;

$(\mathbb{R}\oplus \mathfrak{su}(n,n),\mathfrak{so}^*(2n))$, $\mathbb{K}=\mathbb{H}$;

$(\mathbb{R}\oplus \mathfrak{sl}(n,\mathbb{H}),\mathfrak{so}^*(2n))$, $\mathbb{K}=\mathbb{H}$.

\subsection{General spinorial structures}

The spinorial structures correspond to the following grading of $\mathfrak{g}=\mathfrak{so}(n,n)$ for $\mathbb{K}=\mathbb{R}$, $\mathfrak{g}=\mathfrak{so}(2n,\mathbb{C})$ for $\mathbb{K}=\mathbb{C}$, or $\mathfrak{g}=\mathfrak{so}^\star(4n)$ for $\mathbb{K}=\mathbb{H}$:
$$
\left( \begin{array}{cc}
-A^T & Z \\
X &  A
  \end{array} \right),
$$
where $A\in \mathfrak{gl}(n,\mathbb{K})$, , $X\in \bigwedge^2 \mathbb{K}^{n}$ and $Z\in (\bigwedge^2 \mathbb{K}^{n})^*$.

The corresponding effective model has $G=SO(n,n)\rtimes \mathbb{Z}_2$, $SO(2n,\mathbb{C})\rtimes \mathbb{Z}_2$ or $SO^*(4n)\rtimes \mathbb{Z}_2$, because
$$
g_0=\left( \begin{array}{cc}
-E_n & 0 \\
0 & E_n
  \end{array} \right),
$$
where $E_n$ is $n\times n$ identity matrix, is not in the connected component of identity of $G$.

We show that general spinorial structures are only on simple symmetric spaces and list them.

\begin{prop}
The only locally symmetric spaces admitting a (unique) general spinorial structure are the following types of simple symmetric spaces:

$\mathfrak{so}(p,q)$, $\mathbb{K}=\mathbb{R}$;

$(\mathfrak{so}(n,\mathbb{C}),\mathfrak{so}(p,q))$, $\mathbb{K}=\mathbb{R}$;

$\mathfrak{so}(n,\mathbb{C})$, $\mathbb{K}=\mathbb{C}$, holomorphic Cartan geometry;

$(\mathfrak{so}(2n,\mathbb{C}),\mathfrak{so}^*(2n))$, $\mathbb{K}=\mathbb{H}$;

$\mathfrak{so}^*(2n)$, $\mathbb{K}=\mathbb{H}$;

$(\mathfrak{so}(2p,2q),\mathfrak{su}(p, q)\oplus \mathfrak{so}(2))$, $\mathbb{K}=\mathbb{C}$;

$(\mathfrak{so}^*(2n),\mathfrak{su}(p,q) \oplus \mathfrak{so}(2))$, $\mathbb{K}=\mathbb{C}$.
\end{prop}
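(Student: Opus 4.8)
The proof follows the pattern of the two preceding propositions. Let $(K,H,h)$ be a semisimple homogeneous symmetric space carrying an invariant general spinorial structure. By the reduction established earlier we may assume $i(h)=g_0$ and $i(H)\subset G_0=Gl(n,\mathbb{K})$, so that $\mathfrak{g}_{-1}=\bigwedge^2\mathbb{K}^n$ becomes, as a module over $Ad(i(H))$, a copy of the isotropy representation $\mathfrak{k}/\mathfrak{h}$, which is completely reducible for semisimple symmetric spaces. Conversely, whenever the isotropy representation of $(K,H,h)$ can be realised as the exterior square $\bigwedge^2\mathbb{K}^n$ of a representation $H\to Gl(n,\mathbb{K})$, the inclusion $i$ together with the identification $\mathfrak{k}/\mathfrak{h}\cong\mathfrak{g}_{-1}$ yields an extension, hence the structure exists; by the result of \cite{zal} it is torsion-free, and since the harmonic curvature of these types is purely torsion it is flat. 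So the whole content of the proposition is to determine exactly when the isotropy representation has this shape.

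First I would reduce to the case of a simple symmetric space. Decompose $\mathbb{K}^n=W_0\oplus W_1\oplus\dots\oplus W_k$ with $W_0$ the fixed part and $W_1,\dots,W_k$ nontrivial and $\mathbb{K}$-irreducible. Then
\[\textstyle\bigwedge^2\mathbb{K}^n=\bigwedge^2 W_0\oplus\bigoplus_{i\geq1}(W_0\otimes W_i)\oplus\bigoplus_{i\geq1}\bigwedge^2 W_i\oplus\bigoplus_{1\leq i<j}(W_i\otimes W_j).\]
In a product of symmetric spaces every irreducible summand of $\mathfrak{k}/\mathfrak{h}$ is moved by exactly one simple factor of $H$, while for a simple symmetric space $\mathfrak{k}/\mathfrak{h}$ is already irreducible; both properties are incompatible with the displayed decomposition as soon as $k\geq2$ (then $W_i\otimes W_j$ is nontrivial and moved by every factor acting on $W_i$ or on $W_j$) or as soon as $k=1$ but $W_0\neq0$ (then $\mathfrak{k}/\mathfrak{h}$ would contain both $W_0\otimes W_1$ and $\bigwedge^2 W_1$). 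Hence $\mathbb{K}^n=W_1$ is $\mathbb{K}$-irreducible, a single simple factor of $H$ acts, and all remaining factors act trivially on $\mathfrak{g}_{-1}=\mathfrak{k}/\mathfrak{h}$; therefore the symmetric space is simple and $\bigwedge^2\mathbb{K}^n$ is irreducible.

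It then remains to go through the classification of simple symmetric Lie algebras in \cite{berg} and select those whose isotropy representation, read over $\mathbb{K}=\mathbb{R},\mathbb{C},\mathbb{H}$ respectively, is the exterior square of the standard representation of a subgroup of $Gl(n,\mathbb{K})$. The group type algebras $\mathfrak{so}(p,q)$, $\mathfrak{so}(n,\mathbb{C})$ and $\mathfrak{so}^\star(2n)$ occur because the adjoint representation of $\mathfrak{so}$ is the exterior square of its standard representation; $(\mathfrak{so}(n,\mathbb{C}),\mathfrak{so}(p,q))$ and $(\mathfrak{so}(2n,\mathbb{C}),\mathfrak{so}^\star(2n))$ are the realifications of these; and $(\mathfrak{so}(2p,2q),\mathfrak{su}(p,q)\oplus\mathfrak{so}(2))$ together with $(\mathfrak{so}^\star(2n),\mathfrak{su}(p,q)\oplus\mathfrak{so}(2))$ occur because the isotropy representation of the spaces of type $SO(2m)/U(m)$ and $SO^\star(2m)/U(p,q)$ is $\bigwedge^2\mathbb{C}^m$. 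A dimension count, $\dim_{\mathbb{R}}(\mathfrak{k}/\mathfrak{h})=\dim_{\mathbb{R}}\bigwedge^2\mathbb{K}^n$, together with the requirement that $i(H)$ act $\mathbb{K}$-linearly, eliminates the remaining entries (and shows that the small values of $n$ for which $\mathfrak{g}$ coincides with an already treated type do not contribute). Finally, since $i(H)$ acts $\mathbb{K}$-irreducibly on $\mathbb{K}^n$, Schur's lemma forces the centralizer of $i(H)$ in $Gl(\mathfrak{g}_{-1})$ to lie in $G_0$, so the class in $Gl(\mathfrak{g}_{-1})/G_0$ is trivial; as this is not a projective type case the $b_2$-part is immaterial, so Proposition \ref{p4} gives uniqueness up to equivalence (for the holomorphic entry there is in addition the complex conjugate extension of Corollary \ref{comp}).

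The step I expect to be the real work is the case check against \cite{berg}: for each entry one must decide whether the isotropy module carries a $\mathbb{K}$-structure making the $H$-action $\mathbb{K}$-linear and, if so, whether the module is then the exterior square of $\mathbb{K}^n$. The dimension count prunes most candidates quickly, but the borderline cases --- separating the spinorial pattern from the Lagrangean and Grassmannian patterns, and interpreting $\bigwedge^2$ correctly for the quaternionic entries --- require one to examine the representation type explicitly. By contrast the reduction to the simple case and the uniqueness statement are routine given the machinery already in place.
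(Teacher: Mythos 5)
Your proposal follows essentially the same route as the paper: reduce to the simple case by observing that the cross terms in the decomposition of $\bigwedge^2(\oplus_i W_i)$ are incompatible with the block structure of the isotropy representation of a product of symmetric spaces, then run through Berger's list to extract exactly the seven families (the last two arising as the $\mathfrak{u}(p,q)$-type real forms with isotropy $\bigwedge^2\mathbb{C}^m$), with uniqueness from Schur's lemma and flatness from the torsion-only harmonic curvature. One small overstatement: the isotropy module of a simple symmetric space need not be irreducible (it can split into two dual summands in the para-Hermitian case), so the reduction should be phrased in terms of ``at most two irreducible summands, dual to each other'' rather than irreducibility --- but this does not change the outcome, and the paper's own argument is no more detailed at this point.
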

\begin{proof}
Let $(K,H,h)$ be a semisimple homogeneous symmetric space and assume $i(H)\subset G_0=Gl(n,\mathbb{K})$. Since the representation of $i(H)$ is completely reducible, there is invariant complement to any invariant subspace simple factor of $i(H)$ in $\bigwedge^2 \mathbb{K}^{n}$. Since due to structure of $\bigwedge^2 \mathbb{K}^{n}$ there is at most one invariant subspace with non-trivial action of $i(H)$, the symmetric space has to be simple.  Then going through the list of simple symmetric spaces we see that the only those in the proposition act by the prescribed representations of $G_0$ except last two examples. The last two examples corresponds to the maximal compact subgroup of $\mathfrak{so}(2n,\mathbb{C})$ and the signature and the real form of $\mathfrak{k}$ plays no role.
\end{proof}

Of course, if we add abelian factors to certain types of simple symmetric spaces we obtain the following symmetric spaces with general spinorial structures:

$(\mathfrak{u}(n,n),\mathfrak{sp}(2n,\mathbb{R}))$, $\mathbb{K}=\mathbb{R}$;

$(\mathfrak{gl}(n,\mathbb{R}),\mathfrak{sp}(2n,\mathbb{R}))$, $\mathbb{K}=\mathbb{R}$;

$(\mathfrak{gl}(2n,\mathbb{C}),\mathfrak{sp}(2n,\mathbb{C}))$, $\mathbb{K}=\mathbb{C}$, holomorphic Cartan geometry;

$(\mathfrak{gl}(n,\mathbb{H}),\mathfrak{sp}(p,q))$, $\mathbb{K}=\mathbb{H}$;

$(\mathfrak{u}(2p,2q),\mathfrak{sp}(p,q))$, $\mathbb{K}=\mathbb{H}$.

\subsection{$\mathfrak{su}(p,p)$-Cartan geometries}

The $\mathfrak{su}(p,p)$-Cartan geometries structures correspond to the following grading of $\mathfrak{g}=\mathfrak{su}(p,p)$:
$$
\left( \begin{array}{cc}
-A^T & Z \\
X &  A
  \end{array} \right),
$$
where $A\in \mathfrak{sl}(n,\mathbb{C})$, and $X, Z\in  \mathfrak{u}(n)$.

The corresponding effective model has $G=SU(n,n)\rtimes \mathbb{Z}_2$, because
$$
g_0=\left( \begin{array}{cc}
-E_n & 0 \\
0 & E_n
  \end{array} \right),
$$
where $E_n$ is $n\times n$ identity matrix, is not in the connected component of identity of $G$.

We show that $\mathfrak{su}(p,p)$-Cartan geometries are only on simple symmetric spaces and list them.

\begin{prop}
The only locally symmetric spaces admitting a (unique) $\mathfrak{su}(p,p)$-Cartan geometry are the following types of simple symmetric spaces:

$(\mathfrak{so}(n,n),\mathfrak{so}(n,\mathbb{C}))$; $(\mathfrak{so}^*(2n),\mathfrak{so}(n,\mathbb{C}))$; $(\mathfrak{sp}(n,n),\mathfrak{sp}(n,\mathbb{C}))$; $(\mathfrak{sp}(2n,\mathbb{R}),\mathfrak{sp}(n,\mathbb{C}))$, $(\mathfrak{sl}(2n,\mathbb{R}),\mathfrak{sl}(n,\mathbb{C})\oplus \mathfrak{so}(2))$; $(\mathfrak{sl}(n,\mathbb{H})/\mathfrak{sl}(n,\mathbb{C})\oplus \mathfrak{so}(2))$.
\end{prop}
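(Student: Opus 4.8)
The plan is to argue exactly as for the general Lagrangean and general spinorial structures. By Theorem~\ref{4.1.4} together with Proposition~\ref{p4}, an invariant $\mathfrak{su}(p,p)$--Cartan geometry on $(K,H,h)$ is the same datum as a conjugacy class of homomorphisms $i\colon H\to G_0$ for which a frame of the $-1$ eigenspace $\mathfrak{k}/\mathfrak{h}$ of $Ad(h)$ turns the isotropy representation $Ad(H)$ into the restriction to $i(H)$ of the $G_0$--action on $\mathfrak{g}_{-1}$ (the part $b_2$ being irrelevant for equivalence). Under the identifications coming from the displayed grading, $\mathfrak{g}_{-1}\cong\mathfrak{u}(n)$ is the space of anti--Hermitian $n\times n$ complex matrices, $\mathbb{C}^n$ is the defining module, and $G_0$ is, up to covering, the group of $A\in Gl(n,\mathbb{C})$ with real determinant acting on $\mathfrak{g}_{-1}$ by the congruence $X\mapsto A^{*}XA$.

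First I would rule out all semisimple symmetric spaces with more than one simple factor; this is the analogue of the phrase ``there is at most one invariant subspace with non-trivial action'' from the previous two propositions. If $(K,H,h)=\prod_a(K_a,H_a,h_a)$ has at least two factors, then $\mathfrak{k}/\mathfrak{h}=\bigoplus_a\mathfrak{m}_a$ with $H_a$ acting irreducibly and non-trivially on $\mathfrak{m}_a$ and trivially on $\mathfrak{m}_b$ for $b\neq a$. Since $i(H)$ acts completely reducibly, $\mathbb{C}^n$ decomposes into a trivial part together with non-trivial isotypical parts $V_a$ for the various factors; accordingly a Hermitian form on $\mathbb{C}^n$ decomposes into the Hermitian forms on the individual parts and the mixed blocks $\mathrm{Hom}_{\mathbb{C}}(V_a,V_b)$. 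If two of the $V_a$ are non-trivial, the corresponding mixed block is a summand of $\mathfrak{g}_{-1}$ on which two distinct factors act non-trivially, which is impossible in $\bigoplus_a\mathfrak{m}_a$; if only one of them, say $V_1$, is non-trivial, then $i(H)$ acts on all of $\mathfrak{g}_{-1}$ through $H_1$ alone, so some $\mathfrak{m}_b$ with $b\neq1$ is fixed, again a contradiction. Hence $\mathfrak{k}$ is simple.

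It then remains to go through the classification of simple symmetric Lie algebras in \cite{berg} and keep exactly those $(\mathfrak{k},\mathfrak{h})$ admitting a Lie algebra homomorphism $\mathfrak{h}\to\mathfrak{g}_0$ realizing the isotropy representation as the congruence module $\mathfrak{u}(n)$. Using the standard inclusions of $\mathfrak{so}(\cdot,\mathbb{C})$, $\mathfrak{sp}(\cdot,\mathbb{C})$ and $\mathfrak{sl}(\cdot,\mathbb{C})\oplus\mathfrak{so}(2)$ into $\mathfrak{g}_0$, one checks in each case that $\mathfrak{k}/\mathfrak{h}$ is isomorphic as an $\mathfrak{h}$--module to the space of Hermitian forms precisely for the (two) real forms of $\mathfrak{k}$ that contain the given complex $\mathfrak{h}$; for the last two entries the real form and signature of $\mathfrak{k}$ are otherwise irrelevant, exactly as in the Lagrangean and spinorial cases. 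This produces the listed families and no others. Uniqueness up to equivalence is then immediate, as for all AHS--structures in this section: the invariant structure is locally flat by \cite{zal} (and in each case $i(H)$ acts irreducibly on $\mathfrak{g}_{-1}$, so there is a single equivalence class).

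The hard part is the table check of the previous paragraph: one must traverse Berger's classification and decide, for each simple symmetric pair, whether the isotropy representation can be matched with the $n^2$--dimensional real congruence module $\mathfrak{u}(n)$ under some homomorphism of $\mathfrak{h}$ into $\mathfrak{gl}(n,\mathbb{C})$. The constraints on dimension, on the acting group being of complex type inside $Gl(n,\mathbb{C})$, and on irreducibility are severe enough to leave only the six families, but confirming that nothing else slips through is exactly where the case analysis must be carried out.
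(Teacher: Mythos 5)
Your proposal follows essentially the same route as the paper: reduce to simple symmetric pairs by complete reducibility together with the block structure of the Hermitian module $\mathfrak{u}(n)$ (your isotypic-decomposition argument is a fleshed-out version of the paper's terse ``due to the structure of $\mathfrak{u}(n)$ there is at most one invariant subspace with non-trivial action''), then traverse Berger's list, treating the last two entries via the real embedding of $\mathfrak{sl}(n,\mathbb{C})\oplus\mathfrak{so}(2)$ and concluding uniqueness from flatness and irreducibility. The case-by-case table check is left implicit in your write-up exactly as it is in the paper's, so the level of rigour matches.
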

\begin{proof}
Let $(K,H,h)$ be a semisimple homogeneous symmetric space and assume $i(H)\subset G_0=S(Gl(n,\mathbb{C}))$. Since the representation of $i(H)$ is completely reducible, there is invariant complement to any invariant subspace simple factor of $i(H)$ in $ \mathfrak{u}(n)$. Since due to structure of $ \mathfrak{u}(n)$ there is at most one invariant subspace with non-trivial action of $i(H)$, the symmetric space has to be simple.  Then going through the list of simple symmetric spaces we see that the only those in the proposition act by the prescribed representations of $G_0$ except last two examples. For the last two examples $\mathfrak{sl}(n,\mathbb{C})$ is mapped to real part of $G_0=\mathfrak{sl}(2n,\mathbb{C})$ and $\mathfrak{so}(2)$ is mapped to complex diagonal.
\end{proof}

Of course, if there is a $\mathfrak{su}(p,p)$-Cartan geometry on $\mathfrak{u}(p,q)$, which is not semisimple.

\subsection{Exceptional structures}

There are six AHS--structures corresponding to exceptional Lie groups. We discuss them only briefly. 

For the first three $G_0=SO(5,5)$, $G_0=SO(9,1)$ or $G_0=SO(10,\mathbb{C})$ and acts by spin representation. Consequently such structures can exist only on simple symmetric spaces and going trough the list of simple symmetric spaces we obtain the following lists:

For $G_0=SO(5,5)$:

$(\mathfrak{f}^1_4,\mathfrak{so}(5,4))$; $(\mathfrak{sp}(4,\mathbb{R}),\mathfrak{sp}(2,\mathbb{R})\oplus \mathfrak{sp}(2,\mathbb{R}))$; $(\mathfrak{sp}(2,2),\mathfrak{sp}(1,1)\oplus \mathfrak{sp}(1,1))$; $(\mathfrak{sp}(4),\mathfrak{sp}(2)\oplus \mathfrak{sp}(2))$; $(\mathfrak{sp}(2,2),\mathfrak{sp}(2)\oplus \mathfrak{sp}(2))$;

For $G_0=SO(9,1)$:

$(\mathfrak{f}^2_4,\mathfrak{so}(9))$; $(\mathfrak{f}^2_4,\mathfrak{so}(8,1))$; $(\mathfrak{f}_4,\mathfrak{so}(9))$; $(\mathfrak{sp}(3,1),\mathfrak{sp}(1,1)\oplus \mathfrak{sp}(2))$

For $G_0=SO(10,\mathbb{C})$:

$(\mathfrak{f}_4^\mathbb{C},\mathfrak{so}(9,\mathbb{C}))$, holomorphic Cartan geometry; $(\mathfrak{sp}(4,\mathbb{C}),\mathfrak{sp}(2,\mathbb{C})\oplus \mathfrak{sp}(2,\mathbb{C}))$, holomorphic Cartan geometry; $(\mathfrak{e}_6,\mathfrak{so}(10)\oplus \mathfrak{so}(2))$; $(\mathfrak{e}^3_6,\mathfrak{so}(10)\oplus \mathfrak{so}(2))$;

The inclusions $i:H\to G_0$ are in above cases induced by inclusions of maximal compact subgroups and we use the isomorphism $\mathfrak{so}(5,\mathbb{C})\cong \mathfrak{sp}(2,\mathbb{C})$ (and corresponding isomorphism of real forms).

For the second three $G_0=E_6^1$, $G_0=E_6^4$ or $G_0=E_6^\mathbb{C}$ and acts by standard representation. Consequently such structures can exist only on simple symmetric spaces and going trough the list of simple symmetric spaces we obtain the following lists, where we also add symmetric spaces with these structures coming from simple symmetric spaces after adding abelian factor):

For $G_0=E_6^1$:

$(\mathfrak{su}(8),\mathfrak{sp}(4))$; $(\mathfrak{su}(4,4),\mathfrak{sp}(2,2))$; $(\mathfrak{su}(4,4),\mathfrak{sp}(4,\mathbb{R}))$; $(\mathfrak{sl}(4,\mathbb{H}),\mathfrak{sp}(4))$; $(\mathfrak{sl}(4,\mathbb{H}),\mathfrak{sp}(2,2))$; $(\mathbb{R}\oplus \mathfrak{e}^1_6,\mathfrak{f}^1_4)$; $(\mathbb{R}\oplus \mathfrak{e}^2_6,\mathfrak{f}^1_4)$; 

For $G_0=E_6^4$:

$(\mathfrak{su}(6,2),\mathfrak{sp}(3,1))$; $(\mathfrak{sl}(4,\mathbb{H}),\mathfrak{sp}(3,1))$;  $(\mathbb{R}\oplus \mathfrak{e}_6,\mathfrak{f}_4)$;  $(\mathbb{R}\oplus \mathfrak{e}^4_6,\mathfrak{f}_4)$;  $(\mathbb{R}\oplus \mathfrak{e}^3_6,\mathfrak{f}^2_4)$;  $(\mathbb{R}\oplus \mathfrak{e}^4_6,\mathfrak{f}^2_4)$; 

For $G_0=E_6^\mathbb{C}$:

$(\mathfrak{sl}(8,\mathbb{C}),\mathfrak{sp}(4,\mathbb{C})$, holomorphic Cartan geometry; $(\mathbb{C}\oplus \mathfrak{e}_6^\mathbb{C},\mathfrak{f}_4^\mathbb{C})$, holomorphic Cartan geometry; $(\mathfrak{e}_7,\mathfrak{e}_6\times \mathfrak{so}(2))$; $(\mathfrak{e}^3_7/\mathfrak{e}_6\times \mathfrak{so}(2))$;

The inclusions $i:H\to G_0$ are in above cases induced by inclusions of maximal compact subgroups.

\section*{Acknowledgements}

The author has been supported by the grant GACR 201/09/H012.


\begin{thebibliography}{99}
\bibitem{berg} Berger M., \textit{Les espaces sym\'etriques noncompacts}, Ann. Sci. \' Ec. Norm. Sup\'er. (3), 1957, 74, 85–177. 

\bibitem{bert} Bertram W., \textit{The geometry of Jordan and Lie structures}, Lecture Notes in Mathematics 1754, Springer-Verlag, Berlin, 2000.

\bibitem{parabook} \v Cap A., Slov\'ak J., \textit{Parabolic Geometries I: Background and General Theory}, Mathematical Surveys and Monographs vol. 154, AMS, Providence, 2009.

\bibitem{greg} Gregorovi\v c J., \textit{General construction of symmetric parabolic structures}, Differential Geometry and its Applications 30, 2012, 450--476.

\bibitem{zal} Zalabov\'a L., \textit{Parabolic symmetric spaces}, Ann. Global Anal. Geom., 2010, 37, no. 2, 125-–141.
\end{thebibliography}
\end{document}